		\newlength\mylen
		\newcolumntype{C}{>{\hfil$}p{\mylen}<{$\hfil}}
\newcommand{\pair}[1]{\left\langle#1\right\rangle}
\newcommand{\tate}[1]{\left\langle#1\right\rangle}
\newcommand{\ldef}[1]{\textcolor{purple}{\textit{#1}}}
\DeclareMathOperator{\Ind}{Ind}
\DeclareMathOperator{\triv}{triv}
\DeclareMathOperator{\sign}{sign}
\DeclareMathOperator{\Perv}{Perv}
\DeclareMathOperator{\mix}{mix}
\DeclareMathOperator{\BS}{BS}
\DeclareMathOperator{\Ch}{Ch}
\DeclareMathOperator{\R}{\mathbb{R}}
\DeclareMathOperator{\Z}{\mathbb{Z}}
\DeclareMathOperator{\h}{\mathfrak{h}}
\DeclareMathOperator{\cO}{\mathcal{O}}
\DeclareMathOperator{\cT}{\mathcal{T}}
\DeclareMathOperator{\sD}{\mathscr{D}}
\DeclareMathOperator{\tA}{\textbf{A}}
\DeclareMathOperator{\Hom}{Hom}
\DeclareMathOperator{\id}{id}
\DeclareMathOperator{\Sym}{Sym}
\DeclareMathOperator{\alphac}{\check{\alpha}}
\newtheorem{thm}{Theorem}
\newtheorem{cor}[thm]{Corollary}
\newtheorem{lem}[thm]{Lemma}
\newtheorem{prop}[thm]{Proposition}
\theoremstyle{remark}
\newtheorem{rem}[thm]{Remark}
\title{Monotonicity of inverse Kazhdan-Lusztig polynomials}
\author{J. Baine}
\date{}
\begin{document}

	\maketitle
	\begin{abstract}
		\noindent
		For arbitrary Coxeter systems, we prove that inverse Kazhdan-Lusztig polynomials satisfy a monotonicity property. 
		This follows from the validity of Soergel's conjecture and the existence of injective morphisms between Rouquier complexes in the mixed perverse Hecke category. 
		The monotonicity property is generalised to parabolic Kazhdan-Lusztig polynomials. 
	\end{abstract}
	
\section{Introduction}
	\noindent
	Kazhdan-Lusztig polynomials $h_{y,x}$ arise as the change of basis coefficients between the standard and Kazhdan-Lusztig bases of a Hecke algebra.
	Since their introduction it was immediately apparent these polynomials exhibit many remarkable properties.
	Two of the most famous properties they satisfy are:
	\[
	\settowidth\mylen{XXXXXXXXXXXXXXXXXXXXXXXXXXXXXXXXXXXXXXXXX}
	\begin{array}{lC}
		\text{non-negativity:} & h_{y,x} \in \Z_{\geq 0}[v],~~~~~~~~
		\\
		\text{monotonicity:} & h_{y,x} \preceq v^{\ell(z)-\ell(y)}~h_{z,x} 
	\end{array}
	\]
	where $z \leq y \leq z$ in the Bruhat order and $\preceq$ denotes coefficient-wise comparison. 
	Similarly, inverse Kazhdan-Lusztig polynomials $h^{y,x}$ are known to satisfy: 
	\[
	\settowidth\mylen{XXXXXXXXXXXXXXXXXXXXXXXXXXXXXXXXXXXXXXXXX}
	\begin{array}{lC}
		\text{non-negativity:}& h^{y,x} \in \Z_{\geq 0}[v] ~~~~~~~~~~~
	\end{array}
	\]
	and, in the finite case, an analogous monotonicity property. 
	Despite their combinatorial origins, no elementary proofs of these properties are known. 
	Instead a fecund approach has been to study categories that categorify Hecke algebras, and deduce these properties from the structure of objects therein.
	\\
	\par 
	The main result of this note is the following monotonicity property of inverse Kazhdan-Lusztig polynomials, which holds for arbitrary Coxeter systems. 
	\begin{thm}
	\label{Theorem: monotonicity}
		Let $(W,S)$ be an arbitrary Coxeter system. 
		For any $x, y, z \in W$ satisfying \linebreak 
		$z\leq y \leq x$ in the Bruhat order, the inverse Kazhdan-Lusztig polynomials satisfy
		\begin{align*}
			h^{z, y} \preceq v^{\ell(x)- \ell(y)} ~h^{z, x}.
		\end{align*}
		More generally, for any subset $I \subseteq S$ and minimal length coset representatives $x, y, z \in W^I$ satisfying $z\leq y \leq x$ in the induced Bruhat order, the antispherical inverse Kazhdan-Lusztig polynomials satisfy
		\begin{align*}
			 n^{z, y} \preceq v^{\ell(x)-\ell(y)} ~n^{z, x}.
		\end{align*}
	\end{thm}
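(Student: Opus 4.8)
The plan is to interpret both $h^{z,x}$ and $n^{z,x}$ as graded dimensions of $\Hom$-spaces in the mixed perverse Hecke category and to extract the monotonicity from the left exactness of $\Hom$. Write $\mathcal{H}$ for this category attached to $(W,S)$ and $\mathcal{H}^I$ for its antispherical analogue: each is a graded highest weight category, with standard objects $\Delta_w$ (the Rouquier complexes), costandard objects $\nabla_w$, and indecomposable tilting objects $T_w$ (indexed by $W$, resp.\ $W^I$), and with Grothendieck group identified with the Hecke algebra. The first input is the dictionary: granting Soergel's conjecture (Elias--Williamson), the graded $\nabla$-multiplicities of the indecomposable tilting objects are the inverse Kazhdan--Lusztig polynomials, so that
\[
 h^{z,x}\;=\;\underline{\dim}\,\Hom^{\bullet}_{\mathcal{H}}(\Delta_z,T_x)
\]
(and likewise $n^{z,x}=\underline{\dim}\,\Hom^{\bullet}_{\mathcal{H}^I}(\Delta_z,T_x)$ with the objects now taken in $\mathcal{H}^I$). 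Indeed $T_x$ has a $\nabla$-filtration, $\operatorname{Ext}^{>0}(\Delta_z,\nabla_w)=0$ and $\Hom^{\bullet}(\Delta_z,\nabla_w)=\delta_{z,w}$, so $\Hom^{\bullet}(\Delta_z,T_x)$ records precisely the shifted copies of $\nabla_z$ occurring in a $\nabla$-flag of $T_x$; Soergel's conjecture is what forces $[T_x]$, and hence these multiplicities, to be the genuine inverse Kazhdan--Lusztig polynomials rather than a characteristic-$p$ variant.

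Granting this, the whole statement reduces to producing, for $y\le x$ (in $W$, resp.\ $W^I$), a monomorphism $\iota_{y,x}\colon T_y\hookrightarrow T_x\langle\ell(x)-\ell(y)\rangle$ in $\mathcal{H}$ (resp.\ $\mathcal{H}^I$). For then $\Hom^{\bullet}_{\mathcal{H}}(\Delta_z,-)$, being left exact, sends $\iota_{y,x}$ to an inclusion of graded vector spaces
\[
 \Hom^{\bullet}(\Delta_z,T_y)\;\hookrightarrow\;\Hom^{\bullet}\bigl(\Delta_z,T_x\langle\ell(x)-\ell(y)\rangle\bigr),
\]
and comparing graded dimensions through the dictionary yields $h^{z,y}\preceq v^{\ell(x)-\ell(y)}\,h^{z,x}$ at once (when $z\not\le y$ the left side vanishes and there is nothing to prove, the right side being non-negative by the known non-negativity of inverse Kazhdan--Lusztig polynomials). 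So it remains only to build these monomorphisms between tilting objects, and the antispherical case will be literally the same argument in $\mathcal{H}^I$.

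This last point is where the existence of injective morphisms between Rouquier complexes is used. One first reduces to a covering relation $y\lessdot x$, say $x=ys$ with $s\in S$: composites of monomorphisms are monomorphisms and the Tate twists add along a saturated chain from $z$ to $x$, so it suffices to handle each cover. For a cover, convolution with $B_s$ preserves $\Delta$- and $\nabla$-filtrations, so $T_y\star B_s$ is tilting; it contains $T_x$ as a direct summand with multiplicity one, and one composes a suitable shift of the natural map from $T_y$ into $T_y\star B_s$ with the projection onto this leading summand. Checking that the resulting composite is a monomorphism --- equivalently, that no cancellation occurs once $T_y\star B_s$ is expressed through standard objects (Rouquier complexes) --- is exactly the injectivity statement for morphisms between Rouquier complexes, and this is the one genuinely non-formal step of the proof; everything else (the reduction to covers, the additivity of the twists, and the passage through $\Hom^{\bullet}(\Delta_z,-)$) is routine bookkeeping. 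The main obstacle is therefore establishing this injectivity in full generality; granted it, the monotonicity for $h$ and, verbatim in $\mathcal{H}^I$, for $n$ follows.
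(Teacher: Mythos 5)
Your reduction has the right shape (identify the inverse KL polynomials with categorical multiplicities, produce a monomorphism, apply left exactness) but runs through a different set of objects than the paper and, as written, has real gaps.

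First, the dictionary. The paper does not interpret $h^{z,x}$ via tilting objects; it shows directly that $h^{z,x} = m_{z,\Delta_x}$, the graded multiplicity of the simple object $B_z$ in the minimal complex of the Rouquier complex $\Delta_x$, by a Grothendieck-group computation combined with a parity lemma. Your identity $h^{z,x} = \underline{\dim}\,\Hom^{\bullet}_{\mathcal{H}}(\Delta_z,T_x)$ would need proof and is in fact suspect: in a graded highest weight category, the quantity $\underline{\dim}\,\Hom^{\bullet}(\Delta_z,T_x)$ computes the $\nabla_z$-flag multiplicity $(T_x:\nabla_z)$, and the standard reciprocities (BGG reciprocity plus Ringel duality) relate this to $[\Delta_{?} : L_{?}]$ with the indices permuted and, in finite type, twisted by $w_0$. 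Even in the finite case you would need to check which inverse KL polynomial this actually is; a priori it is $h^{x,z}$ or some $w_0$-twist, not $h^{z,x}$.

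Second, and more fundamentally, the tilting theory you invoke is not available in $\Perv(\h)$ for an arbitrary Coxeter system. The heart $\Perv(\h)$ is defined for all Coxeter systems and has simples $B_x\langle n\rangle$ and (co)standards $\Delta_x$, $\nabla_x$, but the existence of indecomposable tilting objects $T_x$ requires finiteness hypotheses (objects with simultaneous $\Delta$- and $\nabla$-filtrations need not exist when $W$ is infinite, and the category does not obviously have enough projectives or injectives). The paper's route deliberately avoids this: Proposition~\ref{Proposition: Monotonicity} only uses the Krull--Schmidt property of $\sD(\h)$ plus Soergel's Hom formula to show that an injection $C\hookrightarrow D$ in $\Perv(\h)$ forces coefficientwise inequalities $m_{y,C}\preceq m_{y,D}$ of simple multiplicities, and Proposition~\ref{Proposition: Injectivity} constructs the required injections $\Delta_y\langle\ell(y)\rangle\hookrightarrow\Delta_x\langle\ell(x)\rangle$ directly, via the perversity of $\Delta_u\otimes B_s\otimes\Delta_w$ (Lemma~\ref{Lemma: left t-exact}). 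No tilting objects appear.

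Third, your construction of the monomorphism $T_y\hookrightarrow T_x\langle\ell(x)-\ell(y)\rangle$ is not actually carried out; the composite $T_y\to T_y\star B_s\twoheadrightarrow T_x$ involving the projection onto a direct summand is not obviously injective, and the sentence claiming this "is exactly the injectivity statement for morphisms between Rouquier complexes" conflates two different statements: the paper proves injectivity for morphisms between the $\Delta_x$, not for morphisms between tilting objects.

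Finally, for the antispherical case the paper uses a decisive shortcut that you miss: Soergel's Proposition~3.7 in \cite{Soe07} gives the equality $n^{z,x}=h^{z,x}$ for all $x,z\in {}^IW$, so the antispherical inequality is literally a special case of the non-parabolic one. You instead propose to redo the whole argument in an antispherical category $\mathcal{H}^I$, which would require developing antispherical tilting theory from scratch and is unnecessary.
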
 
	This result can be seen as a combinatorial shadow of the existence of injective morphisms between certain Rouquier complexes in the heart of the perverse $t$-structure on the bounded homotopy category of diagrammatic Soergel bimodules associated to a realisation satisfying Soergel's conjecture.    
	This approach is natural and, in a sense we will soon make clear, brings this chapter of the story of monotonicity in Kazhdan-Lusztig theory full circle. 
	\\
	\par 
	In \cite{Irv88} Irving first showed that Kazhdan-Lusztig polynomials for finite Weyl groups exhibit a monotonicity property. 
	That proof is remarkable as Irving actually proved the corresponding statement for inverse Kazhdan-Lusztig polynomials by studying the socle filtrations of Verma modules in the principal block of category $\cO_0$. 
	He then deduced the monotonicity of classical Kazhdan-Lusztig polynomials using the Kazhdan-Lusztig inversion formula.  
	\\
	\par 
	Since the work of Beilinson, Ginzburg and Soergel in \cite{BGS96}, the paradigm has emerged that Kazhdan-Lusztig theory is most evident when studying $\Z$-graded Hecke categories.
	A $\Z$-graded analogue of $\cO_0$ was constructed in \cite{BGS96} and it was shown that Verma modules admit lifts to this category. 
	Thus, monotonicity follows from the existence of injective morphisms between graded Verma modules. 
	\\
	\par 
	In \cite{Soe92,Soe07} Soergel constructed various categories of (bi)modules associated to Coxeter systems.
	Soergel conjectured that, under various assumptions, certain filtrations of these bimodules are encoded by Kazhdan-Lusztig polynomials. 
	Elias and Williamson proved Soergel's conjecture in \cite{EW14}. 
	When the Coxeter group is a Weyl group, the graded category $\cO$ considered in \cite{BGS96} is equivalent to the heart of the perverse $t$-structure on the bounded homotopy category of Soergel modules (satisfying Soergel's conjecture).  
	Under this equivalence, graded Verma modules correspond to the Rouquier complexes we consider. 
	\\
	\par 
	Thus, Rouquier complexes for arbitrary Coxeter systems are the natural objects to consider if one is interested in inverse Kazhdan-Lusztig polynomials of arbitrary Coxeter systems. 
	Given that the story of monotonicity in Kazhdan-Lusztig theory commenced with Irving's study of Verma modules in \cite{Irv88}, it seems apt that the final families of (inverse) Kazhdan-Lusztig polynomials shown to exhibit monotonicity properties are those arising from natural generalisations of Verma modules. 
	\\
	\par 
	The structure of this note is as follows: 
	In section 1 we fix notation relating to Hecke algebras. 
	In section 2 we introduce various Hecke categories. 
	In section 3 we show that injective morphisms between objects in mixed perverse Hecke categories give rise to families of polynomials satisfying a monotonicity property. 
	In section 4 we introduce Rouquier complexes and prove the existence of injective morphisms between them, and deduce the main theorem. 
	In section 5 we recall parabolic Kazhdan-Lusztig theory and deduce the antispherical result from the non-parabolic setting.  
	
\section{Hecke algebras}	

	We begin by fixing notation related to Hecke algebras, their Kazhdan-Lusztig bases and the associated (inverse) Kazhdan-Lusztig polynomials. 
	\\
	\par 
	Let $(W,S)$ be an arbitrary Coxeter system.
	Denote by $\ell$ the length function on $W$, and $\leq$ the Bruhat order. 
	\\
	\par 
	Associated to the Coxeter system $(W,S)$ is the \ldef{Hecke algebra} $H$. 
	It is the associative $\Z[v,v^{-1}]$-algebra on the symbols $\{ \delta_x ~\vert~ x \in W\}$ subject to the relations:
	\begin{align*}
		(\delta_s+v)(\delta_s-v^{-1}) &=0 && \text{for each } s\in S, \text{ and,}
		\\
		\delta_x \delta_y &= \delta_{xy} && \text{whenever } \ell(x) + \ell(y) = \ell(xy).
	\end{align*}
	It follows from \cite{Tits69} that the symbols $\{ \delta_x ~\vert~ x \in W\}$ are a basis of $H$, which we call the standard basis. 
	Observe that $\delta_s^{-1} = \delta_s + v-v^{-1}$, so each $\delta_x$ is invertible. 
	\\
	\par 
	The Hecke algebra $H$ has a unique $\Z$-linear involution satisfying $\overline{v}=v^{-1}$ and $\overline{\delta_x} = \delta_{x^{-1}}^{-1}$ for each $x \in W$. 
	It is a classical result of Kazhdan and Lusztig \cite{KL79} that $H$ has a unique basis $\{b_x ~\vert~ x\in W \}$, called the \ldef{Kazhdan-Lusztig basis}, satisfying $\overline{b_x} = b_x$ and $b_x \in \delta_x + \sum_{y< x} v\Z[z] \delta_y$ for each $x \in W$. 
	We adopt the normalisation of \cite{Soe97}, where $b_s = \delta_s + v$ for each $s \in S$. 
	\\
	\par 
	For each $x,y \in W$, the \ldef{Kazhdan-Lusztig polynomials} $h_{y,x}$ and \ldef{inverse Kazhdan-Lusztig polynomials} $h^{y,x}$ are defined so that the following equalities hold:
	\begin{align*}
		b_x = \sum_y h_{y,x} \delta_y,
		&&
		\delta_x = \sum_{y} (-1)^{\ell(x)-\ell(y)} h^{y,x} b_y.
	\end{align*}
	That is, they arise as the change of basis coefficients between the standard and Kazhdan-Lusztig bases of $H$.
	Define $\mu(y,x) \in \Z$ to be the coefficient of $v$ in $h_{y,x}$. 
	It is a celebrated result of Elias and Williamson \cite{EW14} that $h_{y,x}, h^{y,x} \in  \Z_{\geq 0 }[v]$ and consequently $\mu(y,x) \in \Z_{\geq 0 }$. 
	\\
	\par
	We conclude with the following well-known, combinatorial lemma. 
	\begin{lem}
	\label{Lemma: bs action}
		Fix $x \in W$ and $s \in S$.  
		If $x<xs$, then $b_x b_s = b_{xs} + \sum_{ys<y} \mu(y,x) b_y$. 
		Alternatively, if $xs<x$, then $b_xb_s = (v+v^{-1})b_x$.
	\end{lem}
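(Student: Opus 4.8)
\emph{Proof proposal.}
The plan is to compute $b_x b_s$ directly in the standard basis and then read off its expansion in the Kazhdan--Lusztig basis, using self-duality together with the elementary degree bound $h_{y,x}\in v\Z[v]$ for $y<x$. The case $xs<x$ will be deduced from the case $x<xs$ by means of the identity $b_s^2=(v+v^{-1})b_s$, which is immediate from $b_s=\delta_s+v$ and $\delta_s^2=(v^{-1}-v)\delta_s+1$.

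Suppose first that $x<xs$. Expanding with $b_s=\delta_s+v$ and the rules $\delta_y\delta_s=\delta_{ys}$ for $y<ys$ and $\delta_y\delta_s=\delta_{ys}+(v^{-1}-v)\delta_y$ for $ys<y$, one gets
\[
b_x b_s=\sum_{y\le x}h_{y,x}\bigl(\delta_y\delta_s+v\delta_y\bigr).
\]
Since $x<xs$ every $y\le x$ satisfies $y\le xs$, and as $s$ is a right descent of $xs$ the lifting property of the Bruhat order forces $ys\le xs$; inspecting the sum also shows that the coefficient of $\delta_{xs}$ is $1$. By unitriangularity of $\{b_w\}$ against $\{\delta_w\}$ we may therefore write $b_x b_s=b_{xs}+\sum_{w<xs}a_w b_w$ with uniquely determined $a_w\in\Z[v,v^{-1}]$, and, because $b_x$, $b_s$ and every $b_w$ are self-dual while the bar involution is a ring homomorphism, each $a_w$ is bar-invariant.

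To identify the $a_w$ I would argue by downward induction on $\ell(w)$. For fixed $w<xs$, comparing the coefficient of $\delta_w$ in the two descriptions of $b_x b_s$ gives
\[
h_{w,xs}+a_w+\sum_{w<w'<xs}a_{w'}\,h_{w,w'}=[\delta_w](b_x b_s)=h_{ws,x}+\begin{cases}v^{-1}h_{w,x}& \text{if }ws<w,\\ v\,h_{w,x}& \text{if }ws>w.\end{cases}
\]
Since $w\ne xs$ and (immediately from $w<xs$) $ws\ne x$, the polynomials $h_{w,xs}$ and $h_{ws,x}$ lie in $v\Z[v]$; by the inductive hypothesis each $a_{w'}$ occurring is an integer, and each $h_{w,w'}$ lies in $v\Z[v]$. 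It follows that $a_w$ lies in $v\Z[v]$ if $ws>w$, and that $a_w$ lies in $\Z[v]$ with constant term equal to the coefficient of $v$ in $h_{w,x}$, namely $\mu(w,x)$, if $ws<w$. But a bar-invariant Laurent polynomial lying in $v\Z[v]$ is $0$, and one lying in $\Z[v]$ equals its own constant term; hence $a_w=0$ when $ws>w$ and $a_w=\mu(w,x)$ when $ws<w$, which is precisely the asserted formula $b_x b_s=b_{xs}+\sum_{ys<y}\mu(y,x)b_y$. This pincer --- extracting from mere bar-invariance that $a_w$ is in fact an integer polynomial, and simultaneously computing its constant term --- is the only delicate point in the argument; the Bruhat-order input needed to get the triangular shape started is routine.

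For the case $xs<x$ I would induct on $\ell(x)$. Setting $x'=xs$ we have $x'<x's=x$, so the case already treated, applied to $x'$, gives $b_{x'}b_s=b_x+\sum_{zs<z}\mu(z,x')b_z$. Solving for $b_x$, multiplying on the right by $b_s$, and using $b_s^2=(v+v^{-1})b_s$ together with $b_z b_s=(v+v^{-1})b_z$ for the finitely many $z<x'$ that occur --- valid by induction since $\ell(z)<\ell(x)$ and $zs<z$ --- collapses the right-hand side to $b_x b_s=(v+v^{-1})b_x$, as required.
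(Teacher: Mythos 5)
Your proof is correct and follows essentially the same route as the paper's (brief) proof: expand $b_x b_s$ in the standard basis via $\delta_y b_s$, exploit bar-invariance and the degree constraint $h_{y,x}\in v\Z[v]$ for $y<x$ to pin down the Kazhdan--Lusztig-basis coefficients, and handle the $xs<x$ case by induction on $\ell(x)$ using $b_s^2=(v+v^{-1})b_s$. You have merely spelled out the downward induction on $w$ that the paper leaves implicit in the phrase "is immediate from."
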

	
	\begin{proof}
		The first claim is immediate from the fact that 
		\begin{align*}
			\delta_x b_s 
			=
			\begin{cases}
				\delta_{xs} + v \delta_x
				& \text{if } xs>x,
				\\
				\delta_{xs} + v^{-1} \delta_x
				& \text{if } xs<x,
			\end{cases}
		\end{align*}
		and the requirement that $h_{y,x} \in v\Z[v]$ if $y<x$. 
		The second follows from a simple induction on $\ell(x)$ and the fact that $b_s^2 = (v+v^{-1})b_s$. 
	\end{proof}
	
\section{Hecke categories}

	Any category whose (split, triangulated, etc.) Grothendieck group is isomorphic to the Hecke algebra is called a Hecke category. 
	In this section we briefly recall various Hecke categories that will be the stage upon which our are arguments are set.

\subsection{Preliminaries}
\label{Section: Hecke cat prelim}

	Recall the notion of a realisation, as introduced in \cite[\S3.1]{EW16}.
	This is the data from which we will construct the diagrammatic categories in section \ref{Section: Hecke cat dia}.
	\\
	\par 
	Fix a Coxeter system $(W,S)$ and, for any simple reflections $s,t \in S$, let $m_{st}$ denote the order of $st$ in $W$. 
	Define the $\R$-vector space $\h := \bigoplus_{s \in S} \R \alphac_s$ and let $\h^* := \Hom_{\R}(\h, \R)$.
	Denote by $\pair{-,-}: \h \times \h^* \rightarrow \R$, the natural pairing. 
	For each $s \in S$ fix an element $\alpha_s \in \h^*$ so that 
	\begin{align*}
		\pair{\alphac_s , \alpha_t} = -2\cos\left(\frac{\pi}{m_{st}}\right)
	\end{align*}
	where $\pair{\alphac_s , \alpha_t} =-2$ when $m_{st} = \infty$. 
	Finally, we endow $\h$ with the structure of a $W$-module by requiring $s(\lambda) = \lambda- \pair{\lambda, \alpha_s}\alphac_s$ for each $s \in S$ and $\lambda \in \h$. 
	The data $(\h, \{ \alphac_s \}, \{ \alpha_s \})$ is called the \ldef{geometric realisation} of $(W,S)$. 
	It is a realisation in the sense of \cite[\S3.1]{EW16}. 
	\\
	\par 
	Set $R:= \Sym(\h)$, the symmetric algebra on the $\R$-vector space $\h$, which we consider as a graded $\R$-vector space where $\h$ is in degree $2$.
	Extend the $W$-module structure on $\h$ to a $W$-module structure on $R$ in the obvious way. 
	\\
	\par 
	\begin{rem}
		We consider the geometric realisation (over $\R$) because it is known to satisfy Soergel's conjecture, see \cite{EW14}. 
		The arguments in this note can easily be adapted to any balanced realisation satisfying Soergel's conjecture.   
	\end{rem}
	
\subsection{Diagrammatic categories}
\label{Section: Hecke cat dia}

	We now introduce various diagrammatic categories associated to the geometric realisation. 
	\\
	\par 
	The \ldef{diagrammatic Bott-Samelson category} $\sD^{\Z}_{\BS}(\h)$ is the following $\R$-linear, monoidal category which is enriched over $\Z$-graded vector spaces.
	Its objects are $B_{\underline{x}}$ where $\underline{x}$ is an expression in the simple reflections $s \in S$, i.e. $\underline{x}=(s_1 , \dots , s_k)$.
	The monoidal product is given by the concatenation of expressions $B_{\underline{x}} \star B_{\underline{y}} = B_{\underline{xy}}$. 
	Note that $B_{\emptyset}$ is the monoidal unit. 
	Morphisms in this category are $\R$-linear combinations of certain classes of planar diagram introduced in \cite{EW16}. 
	In particular, if distinct colours are assigned to each simple reflection in $S$, then morphisms are generated by the following vertices, their rotations, and homogeneous polynomials $f \in R$: 
		\[
		\begin{array}{c c c c c c|c c}
			\begin{array}{c}
			\textit{univalent}
			\\
			\textit{vertex}
			\end{array}
		&~~~~& 
			\begin{array}{c}
			\textit{trivalent}
			\\
			\textit{vertex}
			\end{array}
		&~~~~&
			\begin{array}{c}
			2m_{st}\textit{-valent} 
			\\
			\textit{vertex} 
			\end{array}
		&~~&~~&
			\begin{array}{c}
			\textit{homogeneous}
			\\
			\textit{polynomial}
			\end{array}
		\\
			\begin{array}{c}
			\tikz[scale=0.7]
			{
			\draw[color=blue] (3,-1) to (3,0);
			\node[circle,fill,draw,inner sep=0mm,minimum size=1mm,color=blue] at (3,0) {};
			\draw[dotted] (3,0) circle (1cm);
			}
			\end{array}
		&&
			\begin{array}{c}
			\tikz[scale=0.7]
			{
			\draw[color=blue] (-30:1cm) -- (0,0) -- (90:1cm);
			\draw[color=blue] (-150:1cm) -- (0,0);
			\draw[dotted] (0,0) circle (1cm);
			}
			\end{array}
		&&
			\begin{array}{c}
			\tikz[scale=0.7]
			{
			\draw[color=blue] (0,0) -- (22.5:1cm);
			\draw[color=red] (0,0) -- (0:1cm);
			\draw[color=blue] (0,0) -- (67.5:1cm);
			\draw[color=red] (0,0) -- (45:1cm);
			\draw[color=blue] (0,0) -- (112.5:1cm);
			\draw[color=red] (0,0) -- (90:1cm);
			\draw[color=blue] (0,0) -- (157.5:1cm);
			\draw[color=red] (0,0) -- (135:1cm);			
			\draw[color=blue] (0,0) -- (-22.5:1cm);
			\draw[color=red] (0,0) -- (180:1cm);			
			\draw[color=blue] (0,0) -- (-67.5:1cm);
			\draw[color=red] (0,0) -- (-45:1cm);			
			\draw[color=blue] (0,0) -- (-112.5:1cm);
			\draw[color=red] (0,0) -- (-90:1cm);			
			\draw[color=blue] (0,0) -- (-157.5:1cm);
			\draw[color=red] (0,0) -- (-135:1cm);
			\draw[dotted] (0,0) circle (1cm);
			}
			\end{array}
		&&&
			\begin{array}{c}
			\tikz[scale=0.7]{
			\draw[dotted] (0,0) circle (1cm);
			\node at (0,0) {$f$};
			}
			\end{array}
		\\
			\deg 1
		&&
			\deg -1
		&&
			\deg 0
		&&&
			\deg f
		\end{array}
		\]
	where the  $2m_{st}$-valent vertices are coloured by pairs of $s,t \in S$ satisfying $m_{st} < \infty$. 
	The relations satisfied by these diagrams are stated in \cite[\S5.1]{EW16}.  
	If each homogeneous polynomial appearing in a diagram is a monomial, then the degree of the morphism is the sum of the degrees of the vertices (as given above) and degrees of the monomials. 
	We write $\Hom_{\sD^{\Z}_{\BS}(\h)}^{k} (B_{\underline{x}}, B_{\underline{y}})$ for the degree-$k$ morphisms from $B_{\underline{x}}$ to $B_{\underline{y}}$.
	\\
	\par 
	Categories enriched over $\Z$-graded vector spaces can equivalently be considered as categories endowed with an autoequivalence shift-functor $(1)$, see \cite[\S11.2.1]{EMTW20}.
	We denote the corresponding category with shift by $\sD^{(1)}_{\BS}(\h)$.  
	The objects in $\sD^{(1)}_{\BS}(\h)$ are $B_{\underline{x}}(n)$, where $\underline{x}$ is an expression and $n \in \Z$. 
	Morphisms in this category are then defined by
	\begin{align*}
		\Hom_{\sD^{(1)}_{\BS}(\h)} (B_{\underline{x}}(n), B_{\underline{y}}(m))
		:=
		\Hom_{\sD^{\Z}_{\BS}(\h)}^{m-n} (B_{\underline{x}}, B_{\underline{y}}).
	\end{align*}
	The monoidal structure is again given by concatenation of expressions in a manner that is compatible with the shift functor, namely $B_{\underline{x}}(n) \star B_{\underline{y}}(m) = B_{\underline{xy}}(n+m)$.
	For want of a better name, we will say that the object $B_{\underline{x}}(n)$ is in \ldef{Soergel degree} $n$.  
	\\
	\par 
	The \ldef{Elias-Williamson diagrammatic category} $\sD(\h)$ is the Karoubi envelope of the additive hull of $\sD^{(1)}_{\BS}(\h)$.
	It is a Krull-Schmidt, $\R$-linear, additive, monoidal category with shift-functor $(1)$.  
	Every indecomposable object in $\sD(\h)$ is isomorphic to an object of the form $B_x(n)$ where $x \in W$ and $n \in \Z$, see \cite[\S6.6]{EW16}. 
	Moreover, if $\underline{x} = (s_1, \dots ,s_k)$ is a reduced expression for $x$, i.e. $\ell(x)=k$, then $B_x$ occurs as a direct summand of $B_{\underline{x}}$ with multiplicity 1, and $B_x$ does not occur as a direct summand of any $B_{\underline{y}}$ where $\underline{y}$ is an expression of length less than $\ell(x)$. 
	\\
	\par 
	Let $[\sD(\h)]$ denote the split Grothendieck group of $\sD(\h)$. 
	The monoidal structure on $\sD(\h)$ endows $[\sD(\h)]$ with a ring structure, where $[B \star B'] = [B][B']$ for any objects $B,B'$. 
	Further, we endow $[\sD(\h)]$ with the structure of a $\Z[v,v^{-1}]$-module by imposing the relation $[B(1)] = v[B]$. 
	\\
	\par
	We conclude by recalling \ldef{Soergel's categorification theorem}.
	Namely, there is a unique isomorphism of $\Z[v,v^{-1}]$-algebras 
	\begin{align*}
		[\sD(\h)] \tilde{\longrightarrow} H
	\end{align*}
	induced by $[B_s]=b_s$, see \cite[\S6.6]{EW16}. 
	It will be incredibly important for the arguments in this note that geometric realisation satisfies Soergel's conjecture, see \cite{EW14}.
	In particular, if we identify $[\sD(\h)]$ with $H$ using the above isomorphism, then for each $x \in W$ we have 
	\begin{align}
	\label{Equation: Soergel conjecture}
		[B_x] = b_x.
	\end{align}
	Another consequence of Soergel's conjecture is that the graded ranks of Hom spaces can be determined using Soergel's Hom formula, see \cite[\S5]{Soe07}. 
	In particular, if $n \leq 0$ then
	\begin{align}
	\label{Equation: Soergels Hom Formula}
		\Hom_{\sD(\h)}(B_x , B_y(n))
		\cong
		\begin{cases}
			\R & x=y \text{ and } n=0, 
			\\ 
			0 & \text{otherwise}. 
		\end{cases}
	\end{align}
	If $n>0$ then $\dim_{\R} \Hom_{\sD(\h)}(B_x , B_y(n))$ is frequently non-zero and can be determined in terms of Kazhdan-Lusztig polynomials. 

\subsection{Mixed categories}
\label{Section: Hecke cat mixed}

	Finally, we introduce various mixed Hecke categories. 
	The identity $\delta_s = b_s -v$ implies there are no objects in $\sD(\h)$ which categorify the standard basis of $H$. 
	Thus, this extra level of complexity is necessary if one is interested in inverse Kazhdan-Lusztig polynomials. 
	\\
	\par 
	Let $\Ch(\h)$ denote the category of bounded cochain complexes in $\sD(\h)$.
	It is an additive, monoidal category. 
	Moreover, it has autoequivalence shift-functors $(1)$, inherited from $\sD(\h)$, and $[1]$, which is the cohomological shift functor. 
	Given a complex $C$ we write $C^i$ for the object in cohomological degree $i$. 
	The monoidal structure  on $\sD(\h)$ induces a monoidal structure on $\Ch(\h)$. 
	In particular given complexes $C$ and $D$, one defines the complex $C \otimes D$ by
	\begin{align*}
		(C \otimes D)^i := \bigoplus_{i=j+k} C^j \star D^k,
	\end{align*} 
	and the differential on $C^j \star D^k$ as $d_{C} \otimes 1 + (-1)^j \otimes d_D$. 
	We will abuse notation by identifying $\sD(\h)$ with the full subcategory of complexes concentrated in cohomological degree 0.
	\\
	\par 
	It will often be convenient to choose a splitting of each $C^i$. 
	Since $\sD(\h)$ is Krull-Schmidt, the multiplicity of each indecomposable summand in $C^i$ is independent of the choice of splitting, but the summands themselves are not. 
	No argument we use relies on a particular choice of splitting.
	Once a splitting has been chosen, the differential $d: C^i \rightarrow C^{i+1}$ is represented by a matrix of morphisms in $\sD(\h)$. 
	Following the notation of \cite[\S4.1]{Eli18}, an entry of this matrix is called a \ldef{summand of the differential}. 
	In other words, a summand of the differential is the restriction of the differential on $C^i$ so that its domain is a single indecomposable summand of $C^i$ and its codomain is a single indecomposable summand of $C^{i+1}$. 
	\\
	\par
	A complex $C$ in $\Ch(\h)$ is said to be \ldef{minimal} if $C$ is has no contractible summands.
	Equivalently, no summand of the differential is an isomorphism, see \cite[\S4.1]{Eli18}. 
	Given a complex $D$, a complex $C$ is said to be a \ldef{minimal subcomplex} of $D$ if: $C$ is a subcomplex of $D$; $C$ is minimal; and, the inclusion $C \hookrightarrow D$ is a homotopy equivalence. 
	Every complex $D$ has a minimal complex $D^{\min}$, and this complex is unique up to isomorphism in $\Ch(\h)$, see \cite[\S6.1]{EW14}.
	\\
	\par 
	We define the \ldef{mixed Hecke category}\footnote{This category is sometimes called the bi-equivariant category, as in \cite{ARV20}.} to be $D^{\mix}(\h) := K^b(\sD(\h))$. 
	That is, it is the bounded, homotopy category of $\sD(\h)$. 
	It is triangulated and monoidal, with monoidal structure defined as above, and has shift-functors $(1)$ and $[1]$.  
	\\
	\par 
	Consider the full subcategories $\cT^{\geq 0}$ and $\cT^{\leq 0}$ of $D^{\mix}(\h)$ whose objects are
	\begin{align*}
			\cT^{\geq 0}
			:=
			\{ C \in D^{\mix}(\h) 
			~\vert~  
			C^{\min, i} \in \langle B_x (k) ~\vert~ x \in W,~ k \leq i  \rangle_{\oplus} \},
		\\
			\cT^{\leq 0}
			:=
			\{ C \in D^{\mix}(\h) 
			~\vert~ 
			C^{\min, i} \in \langle B_x (k) ~\vert~ x \in W,~ k \geq i  \rangle_{\oplus} \},
	\end{align*}
	where $\langle B_x (k) ~\vert~ x \in W \rangle_{\oplus}$ denotes the full, additive, non-monoidal subcategory of $\sD(\h)$. 
	The pair $(\cT^{\leq 0}, \cT^{\geq 0})$ defines a $t$-structure on $D^{\mix}(\h)$ called the \ldef{perverse $t$-structure}, see \cite[\S6.3]{EW14}. 
	The heart of this $t$-structure is the \ldef{mixed perverse Hecke category} $\Perv(\h)$. 
	The objects in $\Perv(\h)$ can be explicitly characterised in terms of their minimal complexes, namely
	\begin{align}
	\label{Equation: Perverse}
		\Perv(\h) 
		:= 
		\Bigl\{ C \in D^{\mix}(\h) ~\Big\vert~ C^{\min,i} \cong \bigoplus_{x \in W} B_x(i)^{\oplus m_{x,i}} \Bigr\}
	\end{align}
	for some integers $m_{x,i}$. 
	In particular, if $C$ is a cochain complex where the object in cohomological degree $i$ is exclusively in Soergel degree $i$, then $C$ is a minimal complex whose homotopy equivalence class is an object in $\Perv(\h)$.
	The category $\Perv(\h)$ is endowed with a $t$-exact shift-functor $\tate{1} = [1](-1)$. 
	Moreover, up to isomorphism, every simple object is of the form $B_x \langle n\rangle$ for $x \in W$ and $n \in \Z$. 
	\\
	\par 
	As before we write $[\Perv(\h)]$ for the split Grothendieck group of $\Perv(\h)$.
	We conclude by noting that Soergel's categorification theorem extends to an isomorphism of $\Z[v,v^{-1}]$-modules:
	\begin{align*}
		[\Perv(\h)] \tilde{\longrightarrow} H
	\end{align*}
	where, for any $C \in \Perv(\h)$, we have 
	\begin{align*}
		[C] \longmapsto \sum_{i \in \Z} (-1)^{i} [C^{\min,i}]. 
	\end{align*}
	In particular, $[B_x \langle 1 \rangle] = -v^{-1}b_x$.

\section{Minimal complexes and injective morphisms}

	Having introduced the mixed perverse Hecke category $\Perv(\h)$, we now formulate a numerical condition on the number of indecomposable summands appearing in the minimal complexes of objects that is necessarily satisfied if there is an injective morphism between these objects. 
	\\
	\par 
	Fix a minimal complex $C^{\min}$ for each $C$ in $\Perv(\h)$.  
	As previously noted, $C^{\min}$ is unique up to isomorphism \cite[\S6.1]{EW14}.
	Typically, working with representatives of homotopy classes will not be functorial as composition will only be defined up to chain homotopy. 
	Fortunately, the validity of Soergel's conjecture ensures there are no non-trivial chain homotopies between minimal complexes. 
	Indeed, we have:
	
	\begin{lem}
	\label{Lemma: No homotopies}
		For any objects $C$ and $D$ in $\Perv(\h)$ we have 
		\begin{align*}
			\Hom_{\Perv(\h)} (C,D) 
			\cong 
			\Hom_{\Ch(\h)}(C^{\min},D^{\min}).
		\end{align*}
	\end{lem}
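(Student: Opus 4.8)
The plan is to realise the claimed isomorphism as the canonical quotient map sending a chain map to its homotopy class. Since $\Perv(\h)$ is a full subcategory of $D^{\mix}(\h) = K^b(\sD(\h))$ and the inclusion $D^{\min}\hookrightarrow D$ is a homotopy equivalence (and likewise for $C$), we have
\begin{align*}
\Hom_{\Perv(\h)}(C,D) = \Hom_{K^b(\sD(\h))}(C^{\min},D^{\min}),
\end{align*}
the group of chain maps $C^{\min}\to D^{\min}$ modulo those that are null-homotopic. The canonical map $\Hom_{\Ch(\h)}(C^{\min},D^{\min}) \to \Hom_{K^b(\sD(\h))}(C^{\min},D^{\min})$ is therefore well defined and surjective, and it remains only to prove that it is injective; that is, that every null-homotopic chain map $C^{\min}\to D^{\min}$ is already the zero morphism in $\Ch(\h)$.

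So suppose $f\colon C^{\min}\to D^{\min}$ is a chain map admitting a null-homotopy $h=(h^i)_{i\in\Z}$, with $h^i\colon C^{\min,i}\to D^{\min,i-1}$ and $f=d_D h + h\,d_C$; it suffices to show $h=0$. Here the perversity of $C$ and $D$ is decisive. By the characterisation (\ref{Equation: Perverse}) we have $C^{\min,i}\cong\bigoplus_{x\in W}B_x(i)^{\oplus m_{x,i}}$ and $D^{\min,i-1}\cong\bigoplus_{y\in W}B_y(i-1)^{\oplus n_{y,i-1}}$; in other words $C^{\min,i}$ is concentrated in Soergel degree $i$ while $D^{\min,i-1}$ is concentrated in Soergel degree $i-1$. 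Hence the homotopy component $h^i$ lies in $\Hom_{\sD(\h)}(C^{\min,i},D^{\min,i-1})$, which decomposes as a direct sum of copies of
\begin{align*}
\Hom_{\sD(\h)}(B_x(i),B_y(i-1)) = \Hom_{\sD(\h)}(B_x,B_y(-1)).
\end{align*}
Applying Soergel's Hom formula (\ref{Equation: Soergels Hom Formula}) with $n=-1\le 0$ shows every such summand vanishes, so $h^i=0$ for all $i$. Therefore $f=d_D h + h\,d_C=0$, which proves injectivity and hence the lemma.

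The argument is short, and I expect no real obstacle beyond bookkeeping with the grading conventions. The one point that deserves emphasis is that perversity of both $C$ and $D$ is genuinely used: it is precisely the concentration of cohomological degree $i$ in Soergel degree $i$ that pushes every homotopy component into strictly negative internal degree, which is exactly the range in which Soergel's Hom formula --- a consequence of the geometric realisation satisfying Soergel's conjecture --- forces vanishing. For an arbitrary bounded complex the homotopy components could land in non-negative internal degree, where these Hom spaces are generally nonzero, and the conclusion would fail.
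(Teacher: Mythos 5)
Your argument is correct and is essentially the same as the paper's: both proofs observe that perversity forces every homotopy component $h^i\colon C^{\min,i}\to D^{\min,i-1}$ to live in $\Hom_{\sD(\h)}(B_x(i),B_y(i-1))$, which vanishes by Soergel's Hom formula (\ref{Equation: Soergels Hom Formula}), so there are no nonzero null-homotopies. The paper phrases this as showing any two homotopic chain maps $f,g$ are equal, while you show any null-homotopic $f$ is zero; these are the same statement.
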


	\begin{proof}
	Since the geometric realisation satisfies Soergel's conjecture, Soergel's Hom formula (\ref{Equation: Soergels Hom Formula}) implies that for any two objects $B,B'$ in Soergel degree 0, we have $\Hom_{\sD(\h)}(B(i), B'(i-1)) = 0$. 
	It follows from the characterisation of $\Perv(\h)$ in (\ref{Equation: Perverse}) that any chain homotopy is of the form:
			\[
			\begin{tikzcd}[row sep=large, column sep = large]
				\phantom{C^{\min}}\dots \arrow[r] 
			&
				C^{\min , i-1 }
				\arrow[r] \arrow[d, shift right , "\phantom{x} f_{i-1}" right] 
				\arrow[d, shift left, "g_{i-1} \phantom{x}" left]  
			&  
				C^{\min , i }
				\arrow[r] \arrow[d, shift right ,"\phantom{x}f_i" right] 
				\arrow[d, shift left, "g_{i} \phantom{x}" left]  
				\arrow[dl, "0" above] 
			&
				C^{\min , i+1}
				\arrow[r] \arrow[d, shift right , "\phantom{x} f_{i+1}" right] 
				\arrow[d, shift left, "g_{i+1} \phantom{x}" left]  
				\arrow[dl, "0" above] 
			&
				\dots 
			\\
				\phantom{C^{\min}} \dots 
				\arrow[r] 
			&
				D^{\min , i-1 } 
				\arrow[r]  
			&  
				D^{\min , i}
				\arrow[r] 
			&
				D^{\min , i+1}
				\arrow[r] 
			&
				\dots 
			\end{tikzcd}
			\]
			so $f=g$. 
			Hence the claim. 
	\end{proof}

	Thus, taking minimal complexes defines a functor $(-)^{\mix}: \Perv(\h) \rightarrow \Ch(\h)$.
	Moreover, the following is immediate from Lemma \ref{Lemma: No homotopies}.  
	
	\begin{cor}
	\label{Corollary: fully faithful}
		The functor $(-)^{\mix}: \Perv(\h) \rightarrow \Ch(\h)$ is fully faithful. 
	\end{cor}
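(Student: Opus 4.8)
The plan is to read the corollary off Lemma~\ref{Lemma: No homotopies} after unwinding how $(-)^{\mix}$ acts on morphisms. First I would recall that action. A morphism in $\Perv(\h)$ from $C$ to $D$ is a homotopy class of chain maps $C \to D$; having fixed once and for all the minimal complexes $C^{\min}$, $D^{\min}$ together with homotopy equivalences $\iota_C \colon C^{\min} \to C$ and $\pi_D \colon D \to D^{\min}$, one puts $(-)^{\mix}([\phi]) := \pi_D \circ \phi \circ \iota_C \in \Hom_{\Ch(\h)}(C^{\min}, D^{\min})$. This is well defined precisely because, by Lemma~\ref{Lemma: No homotopies}, homotopic chain maps $C^{\min} \to D^{\min}$ between objects of $\Perv(\h)$ coincide --- equivalently, the canonical quotient map $\Hom_{\Ch(\h)}(C^{\min}, D^{\min}) \to \Hom_{\Perv(\h)}(C^{\min}, D^{\min})$ is an isomorphism.

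With this in hand full faithfulness is immediate. The map $(-)^{\mix} \colon \Hom_{\Perv(\h)}(C, D) \to \Hom_{\Ch(\h)}(C^{\min}, D^{\min})$ factors as the composite of the canonical bijection $\Hom_{\Perv(\h)}(C, D) \cong \Hom_{\Perv(\h)}(C^{\min}, D^{\min})$ induced by $\iota_C$ and $\pi_D$, followed by the inverse of the quotient isomorphism just mentioned; being a composite of bijections it is a bijection. Put differently, one checks that this composite is exactly the isomorphism asserted in Lemma~\ref{Lemma: No homotopies}. Functoriality of $(-)^{\mix}$ --- compatibility with identities and composition --- follows from the corresponding properties of $\iota_C$, $\pi_D$ and composition of chain maps, again using the no-homotopies lemma to identify the relevant chain maps on the nose rather than merely up to homotopy.

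The only genuine subtlety, and hence the ``main obstacle'' such as it is, is that passing to minimal complexes is a priori non-functorial: $D^{\min}$ is unique only up to a non-canonical isomorphism in $\Ch(\h)$, and transport of a chain map along homotopy equivalences is defined only up to homotopy. Lemma~\ref{Lemma: No homotopies} --- itself a consequence of Soergel's Hom formula~(\ref{Equation: Soergels Hom Formula}), valid because the geometric realisation satisfies Soergel's conjecture --- rigidifies the situation by killing all homotopies among minimal perverse complexes, and that single input removes every ambiguity at once. I would therefore keep the written proof to a line or two, simply invoking Lemma~\ref{Lemma: No homotopies}.
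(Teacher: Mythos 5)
Your proposal is correct and follows essentially the same route as the paper, which states the corollary as immediate from Lemma~\ref{Lemma: No homotopies}: you simply unwind the well-definedness and functoriality checks (namely, that killing homotopies among maps of minimal perverse complexes rigidifies the choices of $\iota_C$, $\pi_D$ and identifies $\pi_C \circ \iota_C$ with $\id_{C^{\min}}$ on the nose) that the paper leaves implicit.
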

	
	Let us now fix some notation regarding the multiplicities of indecomposable summands in minimal complexes. 
	For any object $C$ in $\Perv(\h)$ recall that $C^{\min,i}$ denotes the object in $\sD(\h)$ in cohomological degree $i$ in the minimal complex $C^{\min}$. 
	By the Krull-Schmidt property of $\sD(\h)$ and the characterisation of the perverse $t$-structure in (\ref{Equation: Perverse}) there is a (non-canonical) splitting 
	\begin{align*}
		C^{\min,i} \cong \bigoplus_{y \in W} B_y(i)^{\oplus m_{y,C}^i} 
	\end{align*}
	where $m_{y,C}^i \in \Z_{\geq 0}$. 
	However, the Krull-Schmidt property implies  the multiplicities $m_{y,C}^i$ are independent of the choice of splitting and the choice of $C^{\min}$. 
	Thus, we define $m_{y,C} \in \Z_{\geq 0}[v,v^{-1}]$ as
	\begin{align*}
		m_{y,C} := \sum_{i \in \Z} m^i_{y,C} ~ v^i. 
	\end{align*}
	Finally, we define a partial order $\preceq$ on $\Z_{\geq 0}[v,v^{-1}]$ where $p \preceq q$ if $q-p \in \Z_{\geq 0}[v,v^{-1}]$.
	That is, $\preceq$ denotes \ldef{coefficient-wise inequality} on Laurent polynomials.
	\\
	\par 
	We now show that injective morphisms between perverse objects give rise to polynomials satisfying a monotonicity property. 
	\begin{prop}
	\label{Proposition: Monotonicity}
		Suppose there is an injective morphism $C \hookrightarrow D$ in $\Perv(\h)$, then $m_{y,C} \preceq m_{y,D}$ for each $y \in W$. 
	\end{prop}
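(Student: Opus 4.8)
The plan is to reduce the statement to a purely homological fact about chain complexes by invoking Corollary \ref{Corollary: fully faithful}: since the functor $(-)^{\mix}$ is fully faithful, an injective morphism $C \hookrightarrow D$ in $\Perv(\h)$ yields a morphism of minimal complexes $C^{\min} \to D^{\min}$ in $\Ch(\h)$ which is itself injective (being monic in the image of a fully faithful functor, hence monic as a chain map, hence monic in each cohomological degree). So it suffices to show: if $f^i : C^{\min,i} \hookrightarrow D^{\min,i}$ is a split injection in $\sD(\h)$ for each $i$, then the multiplicity of $B_y(i)$ in $C^{\min,i}$ is at most that in $D^{\min,i}$, for every $y$ and $i$. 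Summing over $i$ with weights $v^i$ then gives $m_{y,C} \preceq m_{y,D}$.

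First I would check that a monomorphism of complexes in $\Ch(\h)$ is degreewise a monomorphism in $\sD(\h)$; this is standard, since $\sD(\h)$ is additive and kernels (where they exist) are computed degreewise, and in any case one can test monicity against objects concentrated in a single degree. Next, the key point is that in a Krull–Schmidt category a monomorphism need not be split, but we only need the \emph{numerical} consequence: if $g : X \hookrightarrow Y$ is a monomorphism in $\sD(\h)$ then $[X]$ is dominated by $[Y]$ in the appropriate sense. To get this cleanly I would instead argue via Hom-spaces. By Soergel's Hom formula (\ref{Equation: Soergels Hom Formula}), for indecomposables $B_y, B_z$ in Soergel degree $0$ we have $\dim_{\R}\Hom_{\sD(\h)}(B_y(i), B_z(j)) = 0$ unless $j \geq i$, and it equals $\delta_{y,z}$ when $j = i$. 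Hence, fixing $y$ and $i$, applying $\Hom_{\sD(\h)}(B_y(i), -)$ to the degreewise injection $f^i$ and using that this functor is (left) exact on monomorphisms gives
\begin{align*}
m^i_{y,C} = \dim_{\R}\Hom_{\sD(\h)}(B_y(i), C^{\min,i}) \leq \dim_{\R}\Hom_{\sD(\h)}(B_y(i), D^{\min,i}) = m^i_{y,D},
\end{align*}
where the outer equalities are exactly the statement that $\Hom(B_y(i), -)$ reads off the multiplicity of the summand $B_y(i)$, by the vanishing in (\ref{Equation: Soergels Hom Formula}) for the other summands $B_z(k)$ with $k < i$ contributing $0$ and $k > i$ also contributing $0$ since $\Hom(B_y(i), B_z(k))$ with $k>i$ can be nonzero — so here I must be careful: I would restrict attention to the summands $B_z(k)$ of $C^{\min,i}$, all of which have $k = i$ by the characterisation (\ref{Equation: Perverse}) of $\Perv(\h)$, so only the $j = i$ case of the Hom formula is relevant and the multiplicity is genuinely extracted.

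The main obstacle I anticipate is the left-exactness step: I need that applying $\Hom_{\sD(\h)}(B_y(i), -)$ to a monomorphism in $\sD(\h)$ yields an injection of $\R$-vector spaces. This holds because $\Hom$ out of any object is left exact with respect to monomorphisms in any additive (indeed any) category — a morphism $u$ with $f^i \circ u = f^i \circ u'$ forces $u = u'$ by monicity of $f^i$, which is precisely injectivity of $\Hom(B_y(i), f^i)$. Once this is in hand the argument is immediate. I would then conclude by writing
\begin{align*}
m_{y,C} = \sum_{i \in \Z} m^i_{y,C}\, v^i \preceq \sum_{i \in \Z} m^i_{y,D}\, v^i = m_{y,D},
\end{align*}
since each coefficient of $m_{y,D} - m_{y,C}$ is $m^i_{y,D} - m^i_{y,C} \geq 0$, which is the claimed coefficient-wise inequality.
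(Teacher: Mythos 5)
Your $\Hom$-counting argument for the final numerical step is a clean variant of the paper's identification of $\sD(\h,i)$ with $W$-graded vector spaces, and the later steps of your reduction are sound. But there is a genuine gap at the very first step: the assertion that $f^{\min}$ is monic in $\Ch(\h)$ because $f$ is monic in $\Perv(\h)$ and $(-)^{\min}$ is fully faithful. Fully faithful functors \emph{reflect} monomorphisms but do not in general \emph{preserve} them: monicity of $F(f)$ must be tested against all objects of the target category, and objects outside the essential image of $F$ are invisible to full faithfulness. (For a concrete counterexample, the inclusion of divisible abelian groups into all abelian groups is fully faithful, and the quotient $\mathbb{Q} \to \mathbb{Q}/\mathbb{Z}$ is monic among divisible groups but has kernel $\mathbb{Z}$ in the ambient category.) Here the relevant test objects for degreewise monicity are two-term contractible complexes $\cdots \to 0 \to X \to X \to 0 \to \cdots$, which are never minimal and hence never of the form $E^{\min}$ for $E \in \Perv(\h)$; your suggestion of testing against complexes concentrated in a single cohomological degree only controls $f^{\min,i}$ restricted to $\ker(d_{C^{\min}}^i)$, which is weaker. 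So degreewise injectivity of $f^{\min}$ does not follow formally from Corollary \ref{Corollary: fully faithful}.

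The paper's proof fills exactly this gap by an explicit construction, arguing by contradiction. If $\ker(f^{\min,i}) \neq 0$ in the semisimple abelian category $\sD(\h,i)$ for some $i$, the degreewise kernels assemble into a subcomplex $K(f)^{\min}$ of $C^{\min}$. Minimality of $C^{\min}$ forces $K(f)^{\min}$ to be minimal; it is nonzero; and each $K(f)^{\min,i}$ lies in Soergel degree $i$, so its homotopy class $K(f)$ is a nonzero object of $\Perv(\h)$. The inclusion $K(f)^{\min} \hookrightarrow C^{\min}$ is then a nonzero chain map between minimal complexes whose composite with $f^{\min}$ is zero, and it is \emph{only at this point} that full faithfulness is used, to produce a nonzero morphism $K(f) \to C$ in $\Perv(\h)$ killed by $f$, contradicting monicity. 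You would need to supply this construction (or an equivalent) before your $\Hom_{\sD(\h)}(B_y(i),-)$ count can run; once it is in place, your counting and the paper's matrix-rank observation are essentially interchangeable.
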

	
	\begin{proof}
		We begin by noting that for any objects $C$ and $D$ in $\Perv(\h)$, Soergel's Hom formula implies
		\begin{align*}
			\Hom_{\sD(\h)}(C^{\min, i} , D^{\min,i})
			&\cong
			\Hom_{\sD(\h)}\left( \bigoplus_{y} B_y(i)^{\oplus m^i_{y,C}} , \bigoplus_{z} B_z(i)^{\oplus m^i_{z,D}} \right)
			\\&\cong
			\bigoplus_{y \in W} \Hom_{\sD(\h)}\left( B_y^{\oplus m^i_{y,C}} ,  B_y^{\oplus m^i_{z,D}} \right)
			\\&\cong
			\bigoplus_{y \in W}  \text{Mat}_{m^i_{y,C} \times m^i_{y,D}} (\R)
		\end{align*}
		where Mat${}_{p \times q}(\R)$ denotes the space of $p\times q$ matrices over $\R$.
		In particular, the full, additive, non-monoidal subcategory $\sD(\h,i):= \langle B_x(i) ~\vert~ x \in W  \rangle_{\oplus}$ of objects in a fixed Soergel degree is abelian, and equivalent to $W$-graded $\R$-vector-spaces, where $W$ is only considered as a set. 
		Note that $\sD(\h,i)$ can also be considered as the non-full subcategory of Soergel bimodules in Soergel degree $i$, where one only considers degree 0 morphisms.
		\\
		\par
		Let $f: C \rightarrow D$ be an injective morphism in $\Perv(\h)$ and assume for a contradiction there exists some $y \in W$ such that $m_{y,C} \not\preceq m_{y,D}$. 
		By Corollary \ref{Corollary: fully faithful} we have a well-defined chain map $f^{\min} : C^{\min} \rightarrow D^{\min}$. 
		Denote by $f^{\min,i} : C^{\min,i} \rightarrow D^{\min,i}$ the induced map in $\sD(\h,i)$. 
		By assumption, there is some $i \in \Z$ such that $m^i_{y,C} > m^i_{y,D}$.
		Hence, every morphism in $\text{Mat}_{m^i_{y,C} \times m^i_{y,D}}(\R)$  has non-zero kernel.
		In particular $\ker(f^i) \neq 0$.
		\\
		\par 
		Now consider the cochain complex $K(f)^{\min}$, which is defined so that $K(f)^{\min, i} := \ker(f^i)$ and whose differential is induced by the differential on $C^{\min}$. 
		Since $C^{\min}$ is minimal, no summand of the differential is an isomorphism, so the same is true of $K(f)^{\min}$.
		Consequently, $K(f)^{\min}$ is a minimal complex which is non-zero, so it is not homotopic to the zero complex.
		By construction $K(f)^{\min,i}$ is in Soergel degree $i$, so the homotopy equivalence class $K(f)$ of $K(f)^{\min}$ is in $\Perv(\h)$.  
		As $\ker(f^i)$ injects into $C^{\min,i}$ we have a non-zero morphism $K(f)^{\min} \rightarrow C^{\min}$. 
		By construction the composition $K(f)^{\min} \rightarrow C^{\min} \rightarrow D^{\min}$ is zero.
		Hence Corollary \ref{Corollary: fully faithful} implies $K(f)$ is a subobject of the kernel of $f$, which contradicts the fact that $f: C \rightarrow D$ is injective. 
	\end{proof}

\section{Rouquier complexes and monotonicity}

	If mixed Hecke categories were the stage upon which our arguments are set, then Rouquier complexes are the cast. 
	They are objects which categorify the standard basis of $H$. 
	We begin by showing the existence of injective morphisms between certain Rouquier complexes. 
	We then deduce Theorem \ref{Theorem: monotonicity} as a result.

\subsection{Injective morphisms between Rouquier complexes}
	
	For any simple reflection $s \in S$, we define objects $F_s$ and $F_s^{-1}$ in $D^{\mix}(\h)$ whose minimal complexes are of the form
	\begin{align}
		\label{Equation: Rouquier complex}
		\begin{array}{ccccccccccc}
			F_{\textcolor{blue} s}
		&
			:
		&
			0
		&
			\longrightarrow
		&
			0
		&
			\longrightarrow
		&
			B_{\textcolor{blue} s}
		&
			\overset{
			\begin{array}{c}
			\tikz[scale=0.3]
			{
			\draw[color=blue] (3,-1) to (3,0);
			\node[circle,fill,draw,inner sep=0mm,minimum size=1mm,color=blue] at (3,0) {};
			}
			\end{array}
			}{\longrightarrow}
		&
			B_{\id}(1)
		&
			\longrightarrow
		&
			0
		\\\\
			F_{\textcolor{blue} s}^{-1}
		&
			: 
		&
			0
		&
			\longrightarrow
		&
			B_{\id}(-1)
		&
			\overset{
			\begin{array}{c}
			\tikz[scale=0.3]
			{
			\draw[color=blue] (3,1) to (3,0);
			\node[circle, fill, draw, inner sep=0mm, minimum size=1mm, color=blue] at (3,0) {};
			}
			\end{array}
			}{\longrightarrow}
		&
			B_{\textcolor{blue} s}
		&
			\longrightarrow
		&
			0
		&
			\longrightarrow
		&
			0
		\end{array}
	\end{align}
	where each $B_s$ is in cohomological degree 0. 
	These were considered by Rouquier in the context of Soergel bimodules \cite[\S9]{Rou06}.
	Fix a reduced expression $\underline{x} = (s_1, \dots , s_k)$ for $x \in W$.
	We define the \ldef{Rouquier complexes} $\Delta_x$ and $\nabla_x$ in $D^{\mix}(\h)$ as 
	\begin{align*}
		\Delta_x := F_{s_1} \otimes \dots \otimes F_{s_k}
		&&
		\nabla_x := F_{s_1}^{-1} \otimes \dots \otimes F_{s_k}^{-1}.
	\end{align*} 
	Rouquier showed that if $(s_1, \dots , s_k)$ and $(s_1', \dots , s_k')$ are distinct reduced expressions for $x$, then $F_{s_1} \otimes \dots \otimes F_{s_k} \cong F_{s_1'} \otimes \dots \otimes F_{s_k'}$ in $D^{\mix}(\h)$, see   \cite[\S9.2.3]{Rou06}.
	Hence $\Delta_x$ is independent of the choice of reduced expression. 
	Similar statements hold for $\nabla_x$. 
	Note that if $\underline{x}= \emptyset$, then we define $\Delta_{\id} \cong \nabla_{\id} \cong B_{\id}$, which is the monoidal unit of $D^{\mix}$.
	Moreover, it is easy to show \cite[\S9.2.4]{Rou06} that 
	\begin{align}
	\label{Equation: invertible}
		F_s \otimes F_s^{-1} \cong B_{\id} \cong F^{-1}_{s} \otimes F_s.   
	\end{align}
	Consequently one has $\Delta_x \otimes \nabla_{x^{-1}} \cong B_{\id}$. 
	\\
	\par 
	Finally, it is important for all that follows that $\Delta_x$ and $\nabla_x$ are perverse for each $x \in W$ by \cite[\S6.5]{EW14}.
	\\
	\par 
	The following Lemma is essentially a repackaging of various arguments in \cite[\S6.4]{EW14}.
	
	\begin{lem}
	\label{Lemma: left t-exact}
		For each $w \in W$, the functors  $\Delta_w \otimes (-)$ and $(-) \otimes \Delta_w$ are left $t$-exact.
	\end{lem}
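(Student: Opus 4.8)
The plan is to reduce the statement to the case $w = s$ a simple reflection, then verify $t$-exactness of $F_s \otimes (-)$ directly on minimal complexes, and finally bootstrap to arbitrary $w$ using that $\Delta_w \cong F_{s_1} \otimes \cdots \otimes F_{s_k}$ for a reduced expression. Since $t$-exactness of a composite of left $t$-exact functors is left $t$-exact, it suffices to show $F_s \otimes (-)$ is left $t$-exact, i.e. that $F_s \otimes \cT^{\geq 0} \subseteq \cT^{\geq 0}$. (The argument for $(-) \otimes \Delta_w$ is entirely symmetric, applying the same reasoning with the monoidal factors on the other side.)

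To show $F_s \otimes C \in \cT^{\geq 0}$ whenever $C \in \cT^{\geq 0}$, I would work with a minimal complex $C^{\min}$ and the explicit two-term complex $F_s^{\min}: B_s \to B_{\id}(1)$ in cohomological degrees $0$ and $1$. Forming the tensor product $F_s \otimes C^{\min}$ gives, in cohomological degree $i$, the object $\bigl(B_s \star C^{\min,i}\bigr) \oplus \bigl(B_{\id}(1) \star C^{\min,i-1}\bigr)$. The second summand is already concentrated in Soergel degrees $\geq i$, using $C^{\min,i-1} \in \langle B_x(k) \mid k \geq i-1\rangle_\oplus$ shifted by $(1)$. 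For the first summand, one uses that $B_s \star B_x$ decomposes (over $\sD(\h)$, using Soergel's conjecture and Lemma~\ref{Lemma: bs action}) as a direct sum of $B_y(j)$ with $j \in \{-1, 0, 1\}$ in a controlled way; the key point is that any $B_y$ appearing in $B_s \star B_x$ with a negative shift $(-1)$ is cancelled against a corresponding summand coming from the differential into it, because $F_s \otimes C^{\min}$ need not be minimal. So I would pass to $(F_s \otimes C^{\min})^{\min}$ and argue that after cancelling all contractible summands, what survives in cohomological degree $i$ lies in $\langle B_x(k) \mid k \geq i\rangle_\oplus$. This is exactly the content extracted from \cite[\S6.4]{EW14}, where the perversity of $\Delta_s \otimes B_x$ (and more generally the behaviour of $\Delta_w \otimes (-)$ on the basis objects $B_x$, which generate) is analysed; since $\cT^{\geq 0}$ is closed under extensions and the objects $B_x(k)[-k']$ with $k \geq k'$ generate it under the relevant operations, it is enough to check the claim on such generators.

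The main obstacle is the cancellation bookkeeping: $F_s \otimes C^{\min}$ is genuinely non-minimal, and one must argue that every "bad" summand $B_y(i-1)$ in cohomological degree $i$ participates in an isomorphism summand of the differential (either to or from an adjacent term) and hence disappears in the minimal model, without accidentally removing "good" summands that are needed. The cleanest way to control this is to invoke the results of \cite[\S6.4]{EW14} at the level of the generating objects $B_x$: there it is shown that $\Delta_s \otimes B_x$ has a minimal complex lying in the correct range of Soergel degrees relative to cohomological degree (reflecting that $\delta_s b_x$ expands in the Kazhdan--Lusztig basis with the expected bounds), and left $t$-exactness for general $C$ then follows because $\cT^{\geq 0}$ is the smallest subcategory containing these generators and closed under extensions and positive shifts, all of which are preserved. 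I would therefore phrase the proof as: reduce to $w = s$; reduce to generators $B_x$; cite \cite[\S6.4]{EW14} for the generator case; conclude by the closure properties of $\cT^{\geq 0}$.
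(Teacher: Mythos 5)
Your final proof plan — reduce to $w=s$, filter away to single indecomposable generators, check the generator case, and conclude by closure of $\cT^{\geq 0}$ under extensions — is exactly the structure of the paper's argument: the paper uses the ``stupid'' filtration of $C^{\min}$ to produce distinguished triangles reducing the claim to $\Delta_s \otimes B_y(j)[-i]$, and then does the generator computation explicitly via Lemma~\ref{Lemma: bs action} and Soergel's conjecture (case $sy>y$ giving a two-term minimal complex in the right degrees; case $sy<y$ giving a contraction down to a single $B_y(j-1)$). So the approach matches. Two caveats. First, your proposal defers the entire generator computation to ``cite \cite[\S6.4]{EW14}''; the paper instead spells it out, which is worth doing since that case analysis \emph{is} the content of the lemma. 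Second, and more importantly, the exploratory part of your argument has the $t$-structure inequalities reversed: with the paper's convention, $\cT^{\geq 0}$ means $C^{\min,i}\in\langle B_x(k)\mid k\leq i\rangle_\oplus$, so the ``dangerous'' summands produced by $B_s\star B_y(j)$ when $sy<y$ are the ones with the \emph{positive} shift $B_y(j+1)$ (possibly exceeding the allowed Soergel degree in cohomological degree $i$), not the negatively shifted ones you flag. Your claim that ``the second summand is already concentrated in Soergel degrees $\geq i$'' and that $C^{\min,i-1}\in\langle B_x(k)\mid k\geq i-1\rangle$ both use the wrong inequality. The cancellation you need is precisely that the offending $B_y(j+1)$ dies in the minimal model, which is what the paper shows by observing the relevant summand of the differential is an isomorphism, using indecomposability of $\Delta_s\otimes B_y(j)$ together with Soergel's Hom formula. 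Fix the inequality direction, and either carry out the $sy>y$ / $sy<y$ computation or cite it, and the argument is complete.
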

	\begin{proof}
		We prove the claim for $\Delta_w \otimes (-)$.
		The proof for $(-) \otimes \Delta_w$ is similar.  
		Observe that if $sw>w$ we have $\Delta_s \otimes \Delta_w \cong \Delta_{sw}$ by \cite[\S9.2.3]{Rou06}. 
		Hence it suffices to prove left $t$-exactness for $\Delta_s \otimes (-). $ 
		\\
		\par 
		Let $C$ an object in $D^{\mix}(\h)$ and $C^{\min}$ its minimal complex. 
		The `stupid' filtration 
		\begin{align*}
			(\sigma_{\geq i} C)^{\min} 
			:=  ~~~\dots 
			\longrightarrow 0 
			\longrightarrow C^{\min,i}
			\longrightarrow C^{\min, i+1} 
			\longrightarrow C^{\min, i+2} 
			\longrightarrow \dots
		\end{align*}
		gives rise to distinguished triangles 
		\begin{align*}
			\sigma_{\geq i+1} C
			\longrightarrow 
			\sigma_{\geq i} C
			\longrightarrow
			C^{\min,i}[-i]
			\longrightarrow
		\end{align*}
		for all $i \in \Z$. 
		Equation (\ref{Equation: invertible}) implies $\Delta_s \otimes (-)$ is an autoequivalence of $D^{\mix}(\h)$. 
		Hence we obtain distinguished triangles 
		\begin{align*}
			\Delta_s \otimes \sigma_{\geq i+1} C
			\longrightarrow 
			\Delta_s \otimes \sigma_{\geq i} C
			\longrightarrow
			\Delta_s \otimes C^{\min,i}[-i]
			\longrightarrow
		\end{align*}
		for all $i \in \Z$. 
		If both  $\Delta_s \otimes \sigma_{\geq i+1} C$ and $\Delta_s \otimes C^{\min,i}[-i]$ are objects in $\cT^{\geq0}$, then so is $\Delta_s \otimes \sigma_{\geq i} C$, by \cite[\S6.3]{EW14}.
		Hence left $t$-exactness of $\Delta_s \otimes (-)$ follows from showing $\Delta_s \otimes C^{\min,i}[-i]$ is in $\cT^{\geq 0}$, and induction on $i$.  
		\\
		\par 
		Suppose $B_y(j)$ is any indecomposable summand of $C^{\min,i}[-i]$. 
		If $sy > y$ then Soergel's conjecture (\ref{Equation: Soergel conjecture}) and Lemma \ref{Lemma: bs action} imply 
		\begin{align*}
			(\Delta_s \otimes B_y(j))^{\min} 
			\cong  ~~\dots 
			\longrightarrow 0 
			\longrightarrow \left(B_{sy} \oplus \bigoplus_{sz<z} B_z^{\oplus\mu(z,y)}\right) (j)
			\longrightarrow B_y(j+1) 
			\longrightarrow 0
			\longrightarrow \dots
		\end{align*} 
		Hence $\Delta_s \otimes B_y(j)$ is in $\cT^{\geq 0}$. 
		Alternatively, if $sy<y$ then Soergel's conjecture (\ref{Equation: Soergel conjecture}) and Lemma \ref{Lemma: bs action} imply
		\begin{align*}
			\Delta_s^{\min} \otimes B_y(j) 
			\cong  ~~\dots 
			\longrightarrow 0 
			\longrightarrow B_y(j-1)\oplus B_y(j+1)
			\longrightarrow B_y(j+1) 
			\longrightarrow 0
			\longrightarrow \dots
		\end{align*} 
		Since $\Delta_s \otimes(-)$ is an autoequivalence and $B_y(j)$ is indecomposable, it follows $\Delta_s \otimes B_y(j)$ is indecomposable.
		Indecomposability and Soergel's Hom formula (\ref{Equation: Soergels Hom Formula}), imply that the subcomplex $ B_y(j+1) \rightarrow  B_y(j+1)$ has a differential which is an isomorphism, so is contractible. 
		Hence
		\begin{align*}
			(\Delta_s \otimes B_y(j))^{\min} 
			\cong  ~~\dots 
			\longrightarrow 0 
			\longrightarrow B_y(j-1)
			\longrightarrow 0
			\longrightarrow \dots
		\end{align*}
		and $\Delta_s \otimes B_y(j)$ is in $\cT^{\geq 0}$. 
		Hence the claim. 
	\end{proof}
	
	We can now show the existence of injective morphisms between Rouquier complexes. 
	
	\begin{prop}
	\label{Proposition: Injectivity}
		Suppose $y\leq x$, then there exists an injective morphism $\Delta_{y} \langle \ell(y) \rangle \hookrightarrow \Delta_x \langle \ell(x) \rangle$.
	\end{prop}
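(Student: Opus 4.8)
The plan is to argue by induction on $\ell(x)$. If $y=x$, or if $\ell(x)=0$, the identity morphism of $\Delta_x\langle\ell(x)\rangle$ does the job, so we may assume $y<x$ and fix some $s\in S$ with $sx<x$. The lifting property of the Bruhat order then produces two cases: either (a) $sy<y$, in which case $sy\leq sx$, or (b) $sy>y$, in which case $y\leq sx$. In both cases $\ell(sx)<\ell(x)$, so the Proposition is available, by induction, for the relevant pair built out of $sx$.

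Before splitting into cases I would isolate the basic building block: for each $s\in S$ there is a monomorphism $g_s\colon \Delta_{\id}\langle 0\rangle\hookrightarrow\Delta_s\langle 1\rangle$ in $\Perv(\h)$. By (\ref{Equation: Rouquier complex}) the minimal complex of $\Delta_s\langle 1\rangle$ is $B_s(-1)\to B_{\id}$, placed in cohomological degrees $-1$ and $0$, and one takes $g_s$ to be the chain map which is $\id_{B_{\id}}$ in cohomological degree $0$. This is a termwise split injection of complexes, so $\mathrm{cone}(g_s)$ is isomorphic to its cokernel, namely $B_s(-1)$ concentrated in cohomological degree $-1$, which lies in $\cT^{\geq 0}$. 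Since a morphism $f$ between perverse objects is a monomorphism in $\Perv(\h)$ exactly when $\mathrm{cone}(f)\in\cT^{\geq 0}$ (equivalently, when the $(-1)$st perverse cohomology of $\mathrm{cone}(f)$ vanishes, which follows from the long exact sequence of perverse cohomology attached to the triangle $C\to D\to\mathrm{cone}(f)\to$), this shows $g_s$ is a monomorphism. More generally, for any $z\in W$ with $sz>z$ I would tensor $g_s$ on the right with $\id_{\Delta_z\langle\ell(z)\rangle}$: using $\Delta_s\otimes\Delta_z\cong\Delta_{sz}$ and $\ell(sz)=\ell(z)+1$, and keeping track of shifts, this is a morphism $\Delta_z\langle\ell(z)\rangle\to\Delta_{sz}\langle\ell(sz)\rangle$. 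It is the image of $g_s$ under the functor $(-)\otimes\Delta_z\langle\ell(z)\rangle$, which by Lemma \ref{Lemma: left t-exact} is left $t$-exact; since this functor sends $\mathrm{cone}(g_s)$ into $\cT^{\geq 0}$ and sends the perverse objects $\Delta_{\id}\langle 0\rangle$, $\Delta_s\langle 1\rangle$ to the perverse objects $\Delta_z\langle\ell(z)\rangle$, $\Delta_{sz}\langle\ell(sz)\rangle$, the resulting morphism is again a monomorphism.

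Case (b) is then immediate: induction gives a monomorphism $\Delta_y\langle\ell(y)\rangle\hookrightarrow\Delta_{sx}\langle\ell(sx)\rangle$, and the building block (taken with $z=sx$, so that $sz=x>sx=z$) supplies a monomorphism $\Delta_{sx}\langle\ell(sx)\rangle\hookrightarrow\Delta_x\langle\ell(x)\rangle$; their composite is the desired monomorphism, since composites of monomorphisms in the abelian category $\Perv(\h)$ are monomorphisms. For case (a), induction supplies a monomorphism $\Delta_{sy}\langle\ell(sy)\rangle\hookrightarrow\Delta_{sx}\langle\ell(sx)\rangle$; I would apply the functor $\Delta_s\langle 1\rangle\otimes(-)=\langle 1\rangle\circ(\Delta_s\otimes(-))$, which is again left $t$-exact by Lemma \ref{Lemma: left t-exact}. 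Since $sy<y$ and $sx<x$ give $\Delta_s\otimes\Delta_{sy}\cong\Delta_y$ and $\Delta_s\otimes\Delta_{sx}\cong\Delta_x$, this functor carries $\Delta_{sy}\langle\ell(sy)\rangle$ and $\Delta_{sx}\langle\ell(sx)\rangle$ to the perverse objects $\Delta_y\langle\ell(y)\rangle$ and $\Delta_x\langle\ell(x)\rangle$ (after the shifts are accounted for), and it sends the cone of the inductive monomorphism into $\cT^{\geq 0}$; hence it preserves that monomorphism, which completes case (a) and the induction.

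The step that takes the most care is the grading bookkeeping: one has to tensor with $\Delta_s\langle 1\rangle$ rather than with $\Delta_s$ so that source and target stay in the normalised form $\Delta_w\langle\ell(w)\rangle$, and one must check that every tensor product appearing is again a genuine Rouquier complex $\Delta_w$ (which holds because we only ever multiply by a simple reflection on the length-increasing side), so that perversity is preserved and the left $t$-exactness statement of Lemma \ref{Lemma: left t-exact} can be invoked. All the input from Soergel's conjecture is already packaged into Lemma \ref{Lemma: left t-exact}; granting it, the remaining argument is a short induction.
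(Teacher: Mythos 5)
Your proof is correct and rests on the same mathematical ingredients as the paper's: the basic monomorphism $\Delta_{\id}\hookrightarrow\Delta_s\langle 1\rangle$, tensoring with Rouquier complexes, Lemma~\ref{Lemma: left t-exact}, and the observation that a morphism between perverse objects is a monomorphism precisely when its cone lies in $\cT^{\geq 0}$ (which is the cone-language version of the paper's ``show the third term of the distinguished triangle is perverse, so the triangle is a short exact sequence in the heart''). The only genuine difference is organizational: you induct on $\ell(x)$ and use the lifting property of the Bruhat order, branching into the two cases $sy<y$ and $sy>y$ and transporting the inductive monomorphism by the autoequivalence $\Delta_s\langle 1\rangle\otimes(-)$; the paper instead invokes the fact that the Bruhat order is generated by deletion relations $uw<usw$ and constructs the monomorphism for each such relation in a single step by tensoring the building block on the left by $\Delta_u$ and on the right by $\Delta_w$. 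Your route trades the two-sided tensoring for an extra inductive ``conjugation'' step; the paper's route trades the case analysis for a slightly stronger input about the Bruhat order. Both are standard, and neither buys anything essential over the other.
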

	
	\begin{proof}
		Recall that the Bruhat order is generated by relations of the form $x' < x$ where: $x=usw$; $x'=uw$; $s \in S$; $\ell(x)= \ell(u)+ \ell(s)+ \ell(w)$; and $\ell(x') = \ell(u) + \ell(w)$.  
		Hence it suffices to show that there is an injective morphism $\Delta_{x'} \langle -1 \rangle \hookrightarrow \Delta_x$, as the desired morphism can then be obtained by composition.  
		\\
		\par 
		It is easy to show that, for each $s \in S$, there is an exact sequence 
		\begin{align*}
			\Delta_{\id}\langle -1\rangle \hookrightarrow \Delta_s  \twoheadrightarrow \Delta_{\id} \otimes B_s
		\end{align*}
		in $\Perv(\h)$, which gives rise to a distinguished triangle 
		\begin{align*}
			\Delta_{\id} \langle -1 \rangle  
			\longrightarrow 
			\Delta_s 
			\longrightarrow
			\Delta_{\id} \otimes B_s 
			\longrightarrow
		\end{align*}
		in $D^{\mix}(\h)$.
		Since we must necessarily have $u<us$, applying the functor $\Delta_u \otimes (-)$ to the preceding distinguished triangle gives the distinguished triangle
		\begin{align*}
			\Delta_{u} \langle -1 \rangle 
			\longrightarrow 
			\Delta_{us}
			\longrightarrow
			\Delta_{u} \otimes B_s 
			\longrightarrow.
		\end{align*}
		By \cite[\S6.5]{EW14} both $\Delta_{u}$ and $\Delta_{us}$ are perverse, so $\Delta_{u} \otimes B_s$ is an object in $\cT^{\leq 0}$. 
		By Lemma \ref{Lemma: left t-exact}, $\Delta_u \otimes (-)$ is left $t$-exact, so $\Delta_{u} \otimes B_s$ is an object in $\cT^{\geq 0}$.
		Hence $\Delta_{u} \otimes B_s$ is perverse.
		\\
		\par 
		Again, applying the functor  $(-) \otimes \Delta_{w}$, gives rise to a distinguished triangle 
		\begin{align*}
			\Delta_{x'} \langle -1 \rangle 
			\longrightarrow 
			\Delta_{x}
			\longrightarrow
			\Delta_{u} \otimes B_s \otimes \Delta_{w}
			\longrightarrow,
		\end{align*}
		where we have used the length conditions on $x$ and $x'$. 
		By an argument identical to above, we see $\Delta_{x'} \langle -1 \rangle$, $\Delta_{x}$ and $\Delta_{u} \otimes B_s \otimes \Delta_{w}$ are all perverse. 
		Hence we obtain an exact sequence
		\begin{align*}
			\Delta_{x'}\langle -1\rangle 
			\hookrightarrow \Delta_{x} 
			 \twoheadrightarrow \Delta_{u} \otimes B_s \otimes \Delta_{w}
		\end{align*}
		from which we obtain the desired injection. 
	\end{proof}

	\begin{rem}
		In \cite{ARV20} the authors construct an analogue of $\Perv(\h)$ when $\h$ is a realisation which need not satisfy Soergel's conjecture. 
		In this setting they show the existence of injective morphisms $\Delta_y \langle \ell(y) \rangle \hookrightarrow \Delta_x \langle \ell(x) \rangle$ when $y<x$, see \cite[\S8.2]{ARV20}. 
		From this, one can deduce that for each simple object $L_z \langle i \rangle$ in $\Perv(\h)$, we have $[\Delta_y \langle \ell(y) \rangle : L_z \langle i \rangle] \leq [\Delta_x \langle \ell(x) \rangle : L_z \langle i \rangle]$. 
		However, when $\h$ does not satisfy Soergel's conjecture it will necessarily be the case that there are some $z \in W$ for which $L_z \not \cong B_z$. 
		In particular, inverse $p$-Kazhdan-Lusztig polynomials need not satisfy the analogous monotonicity property. 
	\end{rem}

\subsection{The first half of the first theorem.}

	If we wish to prove the statement in Theorem \ref{Theorem: monotonicity} pertaining to inverse Kazhdan-Lusztig polynomials, it suffices, by Propositions \ref{Proposition: Monotonicity} and \ref{Proposition: Injectivity}, to show 
	\begin{align*}
		m_{y,\Delta_x} = h^{y,x}
	\end{align*} 
	for each $x,y \in W$. 
	This statement is asserted without proof in \cite[\S6.5]{EW14}. 
	For the sake of completeness, we provide a proof. 
	\\
	\par 
	We first require the following parity property of inverse Kazhdan-Lusztig polynomials. 
	
	\begin{lem}
	\label{Lemma: Parity}
		For any $x,y \in W$ we have $h^{y,x} \in v^{\ell(xy)}\Z[v^{-2},v^2]$.
	\end{lem}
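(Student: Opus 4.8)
The plan is to reduce the statement to the classical parity of ordinary Kazhdan--Lusztig polynomials and then transport it across the Kazhdan--Lusztig inversion formula. Since $\ell(x)+\ell(y)-\ell(xy)$ is always even, we have $v^{\ell(xy)}\Z[v^{-2},v^2]=v^{\ell(x)-\ell(y)}\Z[v^{-2},v^2]$; this set consists of precisely those Laurent polynomials all of whose nonzero coefficients sit in degrees congruent to $\ell(x)-\ell(y)$ modulo $2$, and it is closed under addition and under multiplication by $\Z[v^{-2},v^2]$. The first step is to record the analogous statement for ordinary Kazhdan--Lusztig polynomials: for all $y\le x$ one has $h_{y,x}\in v^{\ell(x)-\ell(y)}\Z[v^{-2},v^2]$. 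This is classical, but it also admits a short self-contained proof by induction on $\ell(x)$. For $\ell(x)>0$ choose $s\in S$ with $xs<x$ and set $w:=xs$, so $w<ws=x$; Lemma~\ref{Lemma: bs action} gives $b_x=b_wb_s-\sum_{zs<z}\mu(z,w)b_z$, and the identity $\delta_u b_s=\delta_{us}+v^{\varepsilon(u)}\delta_u$, with $\varepsilon(u)=1$ if $us>u$ and $\varepsilon(u)=-1$ if $us<u$ (recorded in the proof of that lemma), lets one extract the coefficient of $\delta_u$:
\[
h_{u,x}=h_{us,w}+v^{\varepsilon(u)}h_{u,w}-\sum_{zs<z}\mu(z,w)h_{u,z}.
\]
Each summand lies in $v^{\ell(x)-\ell(u)}\Z[v^{-2},v^2]$ by the inductive hypothesis together with the observations $\ell(us)=\ell(u)\pm 1$, $\ell(x)=\ell(w)+1$, and the fact that $\mu(z,w)$ — the coefficient of $v$ in $h_{z,w}$ — vanishes unless $\ell(w)-\ell(z)$ is odd.

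Next I would derive the inversion formula. Expanding $\delta_x=\sum_w(-1)^{\ell(x)-\ell(w)}h^{w,x}b_w$ in the standard basis via $b_w=\sum_z h_{z,w}\delta_z$ and comparing coefficients of $\delta_z$ shows that $\sum_{z\le w\le x}(-1)^{\ell(x)-\ell(w)}h^{w,x}h_{z,w}=0$ for every $z<x$ (and equals $1$ for $z=x$). Now fix $x$ and prove $h^{y,x}\in v^{\ell(x)-\ell(y)}\Z[v^{-2},v^2]$ by downward induction on $\ell(y)$ over $y\le x$. The base case $y=x$ holds since $h^{x,x}=1$. For $y<x$, taking $z=y$ in the inversion formula and isolating the $w=y$ term (which equals $(-1)^{\ell(x)-\ell(y)}h^{y,x}$, as $h_{y,y}=1$) expresses $h^{y,x}$, up to sign, as $\sum_{y<w\le x}(-1)^{\ell(x)-\ell(w)}h^{w,x}h_{y,w}$. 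For each $w$ in this range we have $\ell(w)>\ell(y)$, so $h^{w,x}\in v^{\ell(x)-\ell(w)}\Z[v^{-2},v^2]$ by the inductive hypothesis, while $h_{y,w}\in v^{\ell(w)-\ell(y)}\Z[v^{-2},v^2]$ by the parity established in the first step. Hence each product lies in $v^{\ell(x)-\ell(y)}\Z[v^{-2},v^2]$, and since this set is closed under addition we conclude $h^{y,x}\in v^{\ell(x)-\ell(y)}\Z[v^{-2},v^2]=v^{\ell(xy)}\Z[v^{-2},v^2]$.

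The argument is really just bookkeeping, so I do not expect a serious obstacle; the only point that needs genuine care is the parity arithmetic — keeping straight that $v^a\Z[v^{-2},v^2]$ depends only on $a$ modulo $2$, that this module is closed under the operations used, and that the various length shifts ($\ell(us)=\ell(u)\pm1$, $\ell(x)=\ell(w)+1$, and the parity of $\ell(w)-\ell(z)$ forced by $\mu(z,w)\neq 0$) combine correctly. If one prefers not to reprove the classical parity of $h_{y,x}$, one may instead cite it (for instance from \cite{KL79} or \cite{Soe97}) and begin directly with the inversion-formula step.
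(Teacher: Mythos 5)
Your proof is correct, but it follows a genuinely different route from the one in the paper. The paper argues directly by induction on $\ell(x)$ for the \emph{inverse} polynomials: writing $\delta_x = \sum_y (-1)^{\ell(x)-\ell(y)}h^{y,x}b_y$ and multiplying by $\delta_s = b_s - v$ (for $xs > x$), it uses Lemma~\ref{Lemma: bs action} to re-expand each $b_y(b_s - v)$ in the Kazhdan--Lusztig basis and reads off the parity of the coefficients of $\delta_{xs}$ from the parity of those of $\delta_x$. You instead proceed in two stages: first you establish the classical parity $h_{y,x} \in v^{\ell(x)-\ell(y)}\Z[v^{-2},v^2]$ of the ordinary Kazhdan--Lusztig polynomials (by an induction of a similar flavour, via $b_x = b_{xs}b_s - \sum \mu(z,xs)b_z$), and then you transport this across the inversion formula $\sum_{z\le w\le x}(-1)^{\ell(x)-\ell(w)}h^{w,x}h_{z,w}=\delta_{z,x}$ by a downward induction on $\ell(y)$. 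What your approach buys is modularity — the first step is a widely known classical fact that one could simply cite, after which the inversion-formula step is essentially automatic bookkeeping. What the paper's approach buys is that it stays entirely on the side of the inverse polynomials and never needs to explicitly invoke the inversion formula; on the other hand it implicitly appeals to the parity of $\mu$ (to ensure $\ell(z)-\ell(y)$ is odd when $\mu(z,y) \neq 0$) in its final ``mod $2$'' assertions, a fact which your argument makes fully explicit. Both are short, elementary and correct.
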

	\begin{proof}
		We prove the claim by induction on $\ell(x)$. 
		Note that it is trivially true for $x=\id$, as $h^{\id,\id}=1$. 
		Now suppose the claim holds for $x$, and $s \in S$ is chosen so that $xs>x$.
		Then, by assumption, one has
		\begin{align*}
			\delta_{xs}  
			= \delta_x (b_s -v)
			&\in 
			\sum_y v^{\ell(xy)}\Z[v^{-2},v^2] b_y (b_s - v)
		\end{align*}
		Now note that if $ys<y$ then Lemma \ref{Lemma: bs action} implies
		\begin{align*}
			v^{\ell(xy)}\Z[v^{-2},v^2] b_y (b_s-v) 
			=
			v^{\ell(xy)-1}\Z[v^{-2},v^2] b_y 
			=
			v^{\ell(xsy)}\Z[v^{-2},v^2] b_y.
		\end{align*}
		Alternatively, if $ys>y$ then Lemma \ref{Lemma: bs action} implies
		\begin{align*}
			v^{\ell(xy)}\Z[v^{-2},v^2] b_y (b_s-v) 
			=
			v^{\ell(xy)}\Z[v^{-2},v^2] \left(b_{ys}-vb_y +\sum_{zs<z}\mu(z,y)b_z\right).
		\end{align*} 
		The claim then follows from the fact that: $\ell(xy)\equiv \ell(xsys) \mod 2$; and, if $zs<z$ then $\ell(xy)\equiv \ell(xsz) \mod 2$. 
	\end{proof}
	
	We now show that inverse Kazhdan-Lusztig polynomials encode the Jordan-H\"{o}lder multiplicities of Rouquier complexes. 
	
	\begin{prop}
		For any $x,y \in W$ we have $m_{y,\Delta_x} = h^{y,x}$.
	\end{prop}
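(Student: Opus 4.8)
\emph{Proof plan.} The idea is to compute the Grothendieck class of $\Delta_x$ in $H$ in two different ways and compare. For $C \in D^{\mix}(\h)$ write $[C] := \sum_{i\in\Z}(-1)^i[C^i]$ for its image under the cohomological Euler characteristic; this defines a ring homomorphism to $H$ which restricts to the module isomorphism $[\Perv(\h)] \tilde{\longrightarrow} H$ recalled at the end of section \ref{Section: Hecke cat mixed}, and for perverse $C$ one may replace $C^i$ by $C^{\min,i}$ without changing the class. From the minimal complex of $F_s$ displayed in (\ref{Equation: Rouquier complex}), Soergel's categorification theorem, and the normalisation $b_s = \delta_s + v$, one computes $[F_s] = [B_s] - [B_{\id}(1)] = b_s - v = \delta_s$. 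Since $\Delta_x = F_{s_1}\otimes\dots\otimes F_{s_k}$ for a reduced expression $(s_1,\dots,s_k)$ of $x$, and $(s_1,\dots,s_k)$ reduced forces $\delta_{s_1}\cdots\delta_{s_k} = \delta_x$ in $H$, this gives $[\Delta_x] = \delta_x$.

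On the other hand $\Delta_x$ is perverse by \cite[\S6.5]{EW14}, so by (\ref{Equation: Perverse}) we may split $\Delta_x^{\min,i} \cong \bigoplus_{y\in W}B_y(i)^{\oplus m^i_{y,\Delta_x}}$. Invoking Soergel's conjecture (\ref{Equation: Soergel conjecture}) and the relation $[B(1)] = v[B]$ yields $[\Delta_x^{\min,i}] = \sum_{y\in W}m^i_{y,\Delta_x}v^ib_y$ in $H$, so
\begin{align*}
	\delta_x = [\Delta_x] = \sum_{y\in W}\Bigl(\sum_{i\in\Z}(-1)^i m^i_{y,\Delta_x}v^i\Bigr)b_y.
\end{align*}
Comparing with the defining expansion $\delta_x = \sum_{y\in W}(-1)^{\ell(x)-\ell(y)}h^{y,x}b_y$ and using that $\{b_y : y\in W\}$ is a basis of $H$, we obtain, for each fixed $y\in W$, the identity
\begin{align*}
	\sum_{i\in\Z}(-1)^i m^i_{y,\Delta_x}v^i = (-1)^{\ell(x)-\ell(y)}h^{y,x}.
\end{align*}

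It remains to remove the signs, and this is exactly where Lemma \ref{Lemma: Parity} enters. Since $w\mapsto(-1)^{\ell(w)}$ is a homomorphism we have $\ell(xy)\equiv\ell(x)-\ell(y)\pmod 2$, so Lemma \ref{Lemma: Parity} says the coefficient of $v^i$ in $h^{y,x}$ vanishes whenever $i\not\equiv\ell(x)-\ell(y)\pmod 2$. Extracting the coefficient of $v^i$ from the last identity: when $i\equiv\ell(x)-\ell(y)\pmod 2$ the signs $(-1)^i$ and $(-1)^{\ell(x)-\ell(y)}$ agree, so $m^i_{y,\Delta_x}$ equals the coefficient of $v^i$ in $h^{y,x}$; when $i\not\equiv\ell(x)-\ell(y)\pmod 2$ that coefficient is $0$, forcing $m^i_{y,\Delta_x}=0$ as well. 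Either way $m^i_{y,\Delta_x}$ is the coefficient of $v^i$ in $h^{y,x}$, and therefore $m_{y,\Delta_x} = \sum_{i\in\Z}m^i_{y,\Delta_x}v^i = h^{y,x}$. The argument is essentially bookkeeping; its one genuinely necessary ingredient is the single-parity property of Lemma \ref{Lemma: Parity}, without which the alternating signs produced by the Euler characteristic could not be untangled. I expect that the only point demanding care is making the two displayed classes live in the same place, i.e. recording that the Euler characteristic on $D^{\mix}(\h)$ is multiplicative and compatible with the module isomorphism on $\Perv(\h)$.
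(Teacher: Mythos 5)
Your argument is correct and essentially identical to the paper's: both compute $[\Delta_x]=\delta_x$ in the Grothendieck group (the paper by induction on $\ell(x)$, you by multiplying $[F_{s_i}]=\delta_{s_i}$ directly, which amounts to the same thing), both expand $[\Delta_x]$ in the Kazhdan--Lusztig basis using Soergel's conjecture, and both invoke Lemma~\ref{Lemma: Parity} to untangle the alternating signs. Your coefficient-by-coefficient explanation of the sign removal is a more spelled-out version of the paper's final displayed equality, but contains no new idea.
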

	\begin{proof}
		We first show that $[\Delta_x] = \delta_x$ for each $x  \in W$. 
		We proceed by induction on $\ell(x)$. 
		By definition we have $\Delta_{\id} \cong B_{\id}$, so $[\Delta_{\id}] = b_{\id} = 1$. 
		Now observe that since $\Delta_s = F_s$, the description of the complex in Equation (\ref{Equation: Rouquier complex}) implies $[\Delta_s] = [B_s] - v[B_{\id}] = b_s - v$. 
		As $b_s = \delta_s +v$, it follows $[\Delta_s]= \delta_s$. 
		Now assume that $[\Delta_x]= \delta_x$ and $s \in S$ is chosen such that $xs>x$ then
		\begin{align*}
			[\Delta_{xs}] 
			=
			[F_{s_1} \otimes \dots  \otimes F_{s_k} \otimes F_s]
			=
			[\Delta_{x} \otimes \Delta_s]
			=
			[\Delta_{x}][\Delta_s]
			=
			\delta_x \delta_s
			= 
			\delta_{xs}. 
		\end{align*}
		Hence the first claim. 
		\\
		\par 
		Now observe that by the validity of Soergel's conjecture (\ref{Equation: Soergel conjecture}), we have 
		\begin{align*}
			[\Delta_{x}]
			= 
			\sum_{i}\sum_{y} (-1)^i ~ m_{y, \Delta_x}^i v^i [B_y]
			=
			\sum_{y} \left(\sum_{i} (-1)^i ~ m_{y, \Delta_x}^i v^i \right) b_y,
		\end{align*}
		where we may exchange the order of summation as objects in $D^{\mix}(\h)$ are bounded. 
		Then the parity property of Lemma \ref{Lemma: Parity} implies
		\begin{align*}
			(-1)^{\ell(x)-\ell(y)}h^{y,x}
			= 
			\sum_{i} (-1)^i ~ m_{y, \Delta_x}^i v^i 
			=
			(-1)^{\ell(x)-\ell(y)}m_{y, \Delta_x}
		\end{align*}
		which proves the claim. 
	\end{proof}

\section{Parabolic Kazhdan-Lusztig polynomials}

	We conclude by showing certain parabolic inverse Kazhdan-Lusztig polynomials, introduced by Deodhar in \cite{Deo87}, satisfy an analogous monotonicity property.  
	\\
	\par 
	Fix a Coxeter system $(W,S)$ and a subset $I \subseteq S$. 
	Let $H_I$ denote the Hecke algebra associated to the Coxeter system $(W_I, I)$, which we consider as a subalgebra of $H$. 
	We can endow $\Z[v,v^{-1}]$ with the structure of a right $H_I$-module in two distinct ways:  
	let $\triv^I$ denote the rank $1$ $H_I$-module where $\delta_s$ acts by $v^{-1}$ for each $s \in I$; and, $\sign^I$ denote the rank $1$ $H_I$-module where $\delta_s$ acts by $-v$ for each $s \in I$.
	The \ldef{spherical module} ${}^I M$ and \ldef{antispherical module} ${}^I N$ are the right $H$-modules 
	\begin{align*}
		{}^I M := \Ind_{H^I}^H \triv^I, 
		&&
		{}^I N := \Ind_{H^I}^H \sign^I.
	\end{align*}
	Denote by ${}^I W$ the set of minimal length coset representatives for $W_I \backslash W$ and $\leq$ the induced Bruhat order.  
	For either module we (abusively) denote by $\delta_x^I$ the element $1 \otimes_{H_I} \delta_x$.
	It is a well-known fact that $\{ \delta_x ^I ~|~ x \in {}^I W\}$ is a basis of each module, which we call the \ldef{standard basis}. 
	\\
	\par 
	The Kazhdan-Lusztig involution restricts to a $\Z$-linear involution of $\Z[v,v^{-1}]$. 
	Moreover, it is easy to check that this induces involutions on ${}^I M$ and ${}^I N$. 
	In \cite{Deo87} Deodhar showed ${}^IM$ and ${}^IN$ admit bases which are analogous to the Kazhdan-Lusztig basis of $H$. 
	Namely, the \ldef{spherical Kazhdan-Lusztig basis} $\{ c_x ~|~ x \in {}^I W \}$ of ${}^IM$ and  the \ldef{antispherical Kazhdan-Lusztig basis} $\{ d_x ~|~ x \in {}^I W \}$ of ${}^IN$ are the bases which are uniquely characterised by the properties:
	\begin{align*}
		\overline{c_x} = c_x ~~~\text{ and }~~~ &c_x \in \delta_x^{I} + \sum_{y<x} v\Z[v] \delta_y^I
		\\
		\overline{d_x} = d_x ~~~\text{ and }~~~ &d_x \in \delta_x^{I} + \sum_{y<x} v\Z[v] \delta_y^I 
	\end{align*}
	for each $x \in {}^I W$. 
	\\
	\par 
	As in the classical case, for each $x,y \in {}^I W$ the \ldef{spherical Kazhdan-Lusztig polynomials} $m_{y,x}$ and \ldef{spherical inverse Kazhdan-Lusztig polynomials} $m^{y,x}$ are defined so that the following equalities hold:
	\begin{align*}
		c_x = \sum_y m_{y,x} \delta_y^I,
		&&
		\delta_x^I = \sum_{y} (-1)^{\ell(x)-\ell(y)} m^{y,x} c_y.
	\end{align*}
	Similarly, the \ldef{antispherical Kazhdan-Lusztig polynomials} $n_{y,x}$ and \ldef{antispherical inverse Kazhdan-Lusztig polynomials} $n^{y,x}$ are defined so that:
	\begin{align*}
		d_x = \sum_y n_{y,x} \delta_y^I,
		&&
		\delta_x^I = \sum_{y} (-1)^{\ell(x)-\ell(y)} n^{y,x} d_y.
	\end{align*}
	
	Having established this notation, we can now finish the proof of Theorem \ref{Theorem: monotonicity}. 
	\begin{proof}[Proof of Theorem \ref{Theorem: monotonicity}]
		The claim for antispherical inverse Kazhdan-Lusztig polynomials follows immediately from the classical case and \cite[Proposition 3.7]{Soe07}. 
		In particular, the latter asserts that for an arbitrary Coxeter system $(W,S)$ and subset $I \subseteq S$, for any $x,z \in {}^I W$ we have an equality $n^{z,x} = h^{z,x}$. 
	\end{proof}
	
	\begin{rem}
		We conclude by noting that the spherical inverse Kazhdan-Lusztig polynomials $m^{y,x}$ do not satisfy a monotonicity property.
		This is evident even when $W$ is finite. 
		For example, when $W$ is type $\tA_n$ and $W_I$ is type $\tA_{n-1}$, the Bruhat order on ${}^I W$ is a chain, and monotonicity fails to hold for any consecutive $z < y < x$.
	\end{rem}

\section*{Acknowledgements}

	This note was part of the author's PhD thesis which was completed at the University of Sydney under the supervision of Geordie Williamson.
	I would like to thank Geordie for suggesting this problem.
	I would also like to thank Geordie and Noriyuki Abe for valuable comments on preliminary versions of this note.   
	The author was supported by the award of a Research Training Program scholarship.



{\small

}

\end{document}